\newcommand{\bbN}{{\mathbb N}}
\newcommand{\bbQ}{{\mathbb Q}}
\newcommand{\bbR}{{\mathbb R}}
\newcommand{\PP}{{\mathbb P}}
\newcommand{\ZZ}{{\mathbb Z}}
\newcommand{\bbZ}{{\mathbb Z}}
\newcommand{\bbC}{{\mathbb C}}
\newcommand{\calU}{\mathcal{U}}
\newcommand{\bs}{\backslash}
\newcommand{\betti}{\beta^{(2)}}
\newcommand{\res}{\operatorname{res}}
\newcommand{\coind}{\operatorname{coind}}
\newcommand{\coker}{\operatorname{coker}}
\newcommand{\spn}{\operatorname{span}}
\newcommand{\Sym}{\operatorname{Sym}}
\newcommand{\PGL}{\operatorname{PGL}}
\newcommand{\GL}{\operatorname{GL}}
\newcommand{\EL}{\operatorname{EL}}
\newcommand{\SL}{\operatorname{SL}}
\newtheorem{theorem}{Theorem}[section]
\newtheorem{lemma}[theorem]{Lemma}
\newtheorem{corollary}[theorem]{Corollary}
\newtheorem{prop}[theorem]{Proposition}
\theoremstyle{definition}
\newtheorem{definition}[theorem]{Definition}
\newtheorem{example}[theorem]{Example}
\newtheorem{remark}[theorem]{Remark}
\numberwithin{equation}{section}
\begin{document}

\title[$L^2$-cohomology and weak notions of normality]{Weak notions of normality and vanishing up to rank in $L^2$-cohomology}

\author{Uri Bader}
\address{Technion, Haifa, Israel}
\email{uri.bader@gmail.com}

\author{Alex Furman}
\address{University of Illinois at Chicago, Chicago, USA}
\email{furman@math.uic.edu}

\author{Roman Sauer}
\address{Karlsruhe Institute of Technology, Karlsruhe, Germany}
\email{roman.sauer@kit.edu}

\thanks{U.B. and A.F. were supported in part by the BSF grant 2008267.}
\thanks{U.B was supported in part by the ISF grant 704/08.}
\thanks{A.F. was supported in part by the NSF grants DMS 0604611 and 0905977.}

\subjclass[2010]{Primary 20J05; Secondary 37A20}
\keywords{$L^2$-cohomology, universal lattices, Thompson's group}

\maketitle
\begin{abstract}
	We study vanishing results for $L^2$-cohomology of countable groups under the presence of subgroups that satisfy some weak normality condition. As a consequence we show that the $L^2$-Betti numbers of $\SL_n(R)$ for any infinite integral domain~$R$ 
	vanish below degree~$n-1$. Another application is the vanishing of all $L^2$-Betti numbers for Thompson's groups $F$ and $T$.
\end{abstract}

\section{Introduction}

An important application of the algebraic theory of $L^2$-Betti 
numbers~\cite{lueck-dimension} (see~\cite{farber} for an alternative approach) is 
that the $L^2$-Betti numbers $\betti_i(\Gamma)$ of a group~$\Gamma$ 
vanish if it has a normal 
subgroup whose $L^2$-Betti numbers vanish. With regard to the \emph{first} $L^2$-Betti 
number one can significantly relax the normality condition to obtain similar 
vanishing results~\cite{peterson+thom}. J.~Peterson and A.~Thom prove in~\cite{peterson+thom} that the first $L^2$-Betti number of a group vanishes if it has a $s$-normal subgroup (defined below) with vanishing first $L^2$-Betti number. 

The 
aim of this article is to extend such vanishing results to  
arbitrary degrees and to present some applications. 
Next we describe the main notions and 
results in greater detail. 

We denote the $\gamma$-conjugate $\gamma^{-1}\Lambda\gamma$ of a subgroup $\Lambda<\Gamma$ 
by $\Lambda^\gamma$. Unless
stated otherwise, all groups are discrete and countable, and all modules are left modules.

\begin{definition}\label{def: weak normality}
	A subgroup $\Lambda$ of a group $\Gamma$ is called
	\begin{enumerate}
		\item\emph{$n$-step s-normal} if for every $(n+1)$-tuple $\omega=(\gamma_0,\ldots, \gamma_n)\in\Gamma^{n+1}$ the
		intersection
		\[\Lambda^\omega:=\Lambda^{\gamma_0}\cap\ldots\cap\Lambda^{\gamma_n}\]
		is infinite.
		\item\emph{s-normal} if it is $1$-step s-normal.
	\end{enumerate}
\end{definition}

\begin{example}
	The subgroup of upper triangular matrices 
	\[
		\begin{pmatrix}
			* & *\\
			0 & *
		\end{pmatrix}<\SL_2(\bbZ[1/p])
	\]
	inside $\SL_2(\bbZ[1/p])$ is $1$-step s-normal but not $2$-step s-normal. The fact that 
	it is $s$-normal can be 
	verified directly or is a special case of the more general results 
	in Subsection~\ref{subsec: Sl and El}. The fact that it is not $2$-step s-normal can 
	again be verified directly; it is also a consequence of 
	Corollary~\ref{cor: vanishing for n-step s-normal amenable subgroups} below and 
	the non-vanishing 
	\begin{equation}\label{eq: second betti}
		\betti_2(\SL_2(\bbZ[1/p]))\ne 0
	\end{equation}
	of the second $L^2$-Betti number of $\SL_2(\bbZ[1/p])$. The group 
	$\SL_2(\bbZ[1/p])$ is an irreducible lattice in $\SL_2(\bbR)\times \SL_2(\bbQ_p)$. 
	The latter locally compact group contains a product of non-abelian free groups as a (reducible) 
	lattice. Hence $\SL_2(\bbZ[1/p])$ is measure equivalent to a product of non-abelian free 
	groups. By an important theorem of D.~Gaboriau~\cite{gaboriau} the non-vanishing 
	of $\betti_n$ is an invariant under measure equivalence, and the second $L^2$-Betti 
	number of 
	a product of non-abelian free groups is non-zero by the Kuenneth formula for $L^2$-cohomology. 
\end{example}

The following is our main result. Recall that the zeroth $L^2$-Betti number of 
a group is zero if and only if the group is infinite. 

\begin{theorem}\label{thm: vanishing for groups with amenable normal subgroups}
Let $\Lambda<\Gamma$ be a subgroup. 
Assume that 
\[
	\betti_i(\Lambda^\omega)=0. 
\]
for all integers $i,k\ge 0$ with $i+k\le n$ and every 
$\omega\in\Gamma^{k+1}$. In particular, $\Lambda$ is an $n$-step s-normal subgroup of  $\Gamma$. 
Then 
\[\betti_i(\Gamma)=0 \text{ for every $i\in\{0,\ldots, n\}$.}\]
\end{theorem}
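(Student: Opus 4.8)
The plan is to realize the intersections $\Lambda^\omega$ as point-stabilizers in a contractible simplicial $\Gamma$-set and then to run the associated equivariant homology spectral sequence, reading off the vanishing from Lück's dimension function. Recall that $\betti_i(\Gamma)=\dim_{\mathcal{N}\Gamma}H_i(\Gamma;\mathcal{N}\Gamma)$, where $H_i(\Gamma;\mathcal{N}\Gamma)=H^\Gamma_i(E\Gamma;\mathcal{N}\Gamma)$, and that $\dim_{\mathcal{N}\Gamma}$ is additive on short exact sequences, monotone under passage to subquotients, and additive over arbitrary direct sums. No induction on $n$ is needed: a single spectral sequence handles all degrees $i\le n$ at once, the ``staircase'' of hypotheses feeding in exactly along antidiagonals.

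First I would form the nonempty $\Gamma$-set $X=\Gamma/\Lambda$ and the simplicial $\Gamma$-set $E_\bullet X$ with $E_pX=X^{p+1}$, face maps deleting coordinates and degeneracies repeating them; this is the \v{C}ech nerve of the augmentation $X\to\mathrm{pt}$, and its realization $|E_\bullet X|$ is contractible because that augmentation admits an extra degeneracy. The crucial observation is that the stabilizer in $\Gamma$ of a point $(\gamma_0\Lambda,\dots,\gamma_p\Lambda)\in X^{p+1}$ is precisely $\Lambda^{\gamma_0}\cap\dots\cap\Lambda^{\gamma_p}=\Lambda^\omega$ for $\omega=(\gamma_0,\dots,\gamma_p)\in\Gamma^{p+1}$. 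Hence, as a $\Gamma$-set, $X^{p+1}=\coprod_{[\omega]}\Gamma/\Lambda^\omega$, the coproduct running over $\Gamma$-orbits of $(p+1)$-tuples.

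Next I would replace the non-free space $|E_\bullet X|$ by the free contractible $\Gamma$-space $|E_\bullet X|\times E\Gamma$, which is again a model for $E\Gamma$, so that $H^\Gamma_*(|E_\bullet X|\times E\Gamma;\mathcal{N}\Gamma)=H_*(\Gamma;\mathcal{N}\Gamma)$. Filtering by the simplicial degree yields a first-quadrant spectral sequence with $E^1_{p,q}=H^\Gamma_q(X^{p+1}\times E\Gamma;\mathcal{N}\Gamma)\cong\bigoplus_{[\omega]}H_q(\Lambda^\omega;\mathcal{N}\Gamma)$, the last identification by Shapiro's lemma applied to each orbit via $(\Gamma/\Lambda^\omega)\times E\Gamma\cong\Gamma\times_{\Lambda^\omega}E\Lambda^\omega$. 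Invoking the induction formula $\dim_{\mathcal{N}\Gamma}H_q(H;\mathcal{N}\Gamma)=\betti_q(H)$ for subgroups $H<\Gamma$, together with additivity of $\dim_{\mathcal{N}\Gamma}$ over direct sums, gives $\dim_{\mathcal{N}\Gamma}E^1_{p,q}=\sum_{[\omega]}\betti_q(\Lambda^\omega)$ with $\omega\in\Gamma^{p+1}$.

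Now the hypothesis enters: for $p+q\le n$ every summand $\betti_q(\Lambda^\omega)$ with $\omega\in\Gamma^{p+1}$ vanishes, so $\dim_{\mathcal{N}\Gamma}E^1_{p,q}=0$ whenever $p+q\le n$. Each $E^\infty_{p,q}$ is a subquotient of $E^1_{p,q}$ and hence also has dimension $0$; since for fixed total degree $i\le n$ the group $H_i(\Gamma;\mathcal{N}\Gamma)$ carries a finite filtration with associated graded pieces $E^\infty_{p,q}$, $p+q=i$, additivity of $\dim_{\mathcal{N}\Gamma}$ forces $\betti_i(\Gamma)=0$ for all $i\le n$. The degree $i=0$ case just reproduces the elementary fact that $\Gamma$ is infinite because $\Lambda$, equivalently any conjugate $\Lambda^{\gamma_0}$, is. I expect the homotopy-theoretic input (contractibility, Shapiro) to be routine; the real care lies in the dimension bookkeeping, namely justifying the induction formula $\dim_{\mathcal{N}\Gamma}H_q(H;\mathcal{N}\Gamma)=\betti_q(H)$ (Lück's restriction-of-coefficients and dimension-flatness results) and checking that additivity and monotonicity of $\dim_{\mathcal{N}\Gamma}$ behave correctly across the possibly infinite coproducts indexed by $\Gamma\backslash X^{p+1}$ — harmless here precisely because all relevant summands vanish.
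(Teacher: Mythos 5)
Your proposal is correct, and it reaches the theorem by a genuinely different route than the paper. The paper works \emph{cohomologically} with coefficients in the algebra $\calU(\Gamma)$ of affiliated operators: it first converts the hypothesis $\betti_j(\Lambda^\omega)=0$ into honest vanishing $H^j(\Lambda^\omega,\calU(\Gamma))=0$ via Thom's result and the self-injectivity of $\calU(\Gamma)$ (Lemma~\ref{ref:l2 induction}), and then runs an induction by dimension-shifting along the coinduced modules $M_{i+1}=\coker\bigl(M_i\to\coind_\Lambda^\Gamma(M_i)\bigr)$, using Mackey's double coset formula to identify $\res_{\Lambda^\omega}^\Gamma\coind_\Lambda^\Gamma(M_i)$ as a \emph{product} of coinduced modules over intersections of conjugates. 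The honest vanishing is essential there precisely because dimension is not additive over infinite products. Your homological spectral sequence for the contractible $\Gamma$-complex built from $(\Gamma/\Lambda)^{\bullet+1}$ — equivalently, the hyper-Tor spectral sequence of the acyclic complex of permutation modules $\bbZ[(\Gamma/\Lambda)^{p+1}]$ — replaces the iterated double coset formula by the orbit decomposition of $(\Gamma/\Lambda)^{p+1}$, whose point stabilizers are exactly the groups $\Lambda^\omega$ with $\omega\in\Gamma^{p+1}$ (up to the harmless relabeling $\omega=(\gamma_0^{-1},\dots,\gamma_p^{-1})$ forced by the paper's convention $\Lambda^\gamma=\gamma^{-1}\Lambda\gamma$). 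Because homology turns this decomposition into a \emph{direct sum}, additivity of $\dim_{L(\Gamma)}$ over arbitrary sums suffices, and you can dispense with self-injectivity entirely; the price is that you must invoke L\"uck's induction theorem (faithful flatness of $L(\Gamma)$ over $L(H)$ and $\dim_{L(\Gamma)}H_q(H;L(\Gamma))=\betti_q(H)$, Theorems~6.29 and~6.54 of~\cite{lueck-book}) and the additivity and cofinality properties of the dimension function, all of which are available. Your approach also handles all degrees $i\le n$ in a single first-quadrant spectral sequence rather than by induction on the shifting degree, and it is close in spirit to the spectral-sequence methods of~\cite{sauer+thom}. Both arguments are sound; the paper's is more elementary in its homological-algebra inputs (only long exact sequences and Shapiro), while yours is more systematic and makes the role of the antidiagonal hypothesis $i+k\le n$ transparent.
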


Recall that $\Lambda<\Gamma$ is called \emph{commensurated} if 
$\Lambda\cap\Lambda^\gamma$ is of finite index in $\Lambda$ and $\Lambda^\gamma$ for 
every $\gamma\in\Gamma$. The corollary follows from the preceding theorem and the fact 
that one has the relation 
$\betti_i(\Gamma')=[\Gamma:\Gamma']\cdot\betti_i(\Gamma)$ for a subgroup 
$\Gamma'<\Gamma$ of finite index. 

\begin{corollary}\label{cor: vanishing for almost normal subgroups}
	Let $\Lambda<\Gamma$ be a commensurated subgroup. If $\betti_i(\Lambda)=0$ for every $i\in\{0,\ldots, n\}$, then also
	$\betti_i(\Gamma)=0$ for every $i\in\{0,\ldots, n\}$.
\end{corollary}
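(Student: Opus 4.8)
The plan is to deduce the corollary directly from Theorem~\ref{thm: vanishing for groups with amenable normal subgroups} by checking that its hypothesis holds for the commensurated subgroup $\Lambda$. That hypothesis demands $\betti_i(\Lambda^\omega)=0$ for all $i,k\ge 0$ with $i+k\le n$ and every $\omega\in\Gamma^{k+1}$. Since $i+k\le n$ forces $i\le n$, it suffices to prove the a priori stronger statement that $\betti_i(\Lambda^\omega)=0$ for every $i\in\{0,\ldots,n\}$ and every tuple $\omega=(\gamma_0,\ldots,\gamma_k)\in\Gamma^{k+1}$, regardless of $k$.

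First I would show that $\Lambda^\omega$ has finite index in the single conjugate $\Lambda^{\gamma_0}$. Because $\Lambda$ is commensurated, each conjugate $\Lambda^{\gamma_j}$ is \emph{commensurable} with $\Lambda$: by definition $\Lambda\cap\Lambda^{\gamma_j}$ has finite index in both $\Lambda$ and $\Lambda^{\gamma_j}$. Conjugation is a group automorphism and therefore preserves the finite-index relation, so applying the inner automorphism $x\mapsto\gamma_0 x\gamma_0^{-1}$ carries $\Lambda^{\gamma_0}\cap\Lambda^{\gamma_j}$ to $\Lambda\cap\Lambda^{\gamma_j\gamma_0^{-1}}$, which again has finite index in $\Lambda$ since $\Lambda$ is commensurated. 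Hence each $\Lambda^{\gamma_0}\cap\Lambda^{\gamma_j}$ has finite index in $\Lambda^{\gamma_0}$. As a finite intersection of finite-index subgroups is again of finite index, the subgroup
\[
\Lambda^\omega=\Lambda^{\gamma_0}\cap\cdots\cap\Lambda^{\gamma_k}=\bigcap_{j=1}^{k}\bigl(\Lambda^{\gamma_0}\cap\Lambda^{\gamma_j}\bigr)
\]
has finite index in $\Lambda^{\gamma_0}$.

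Next I would invoke the multiplicativity formula $\betti_i(\Gamma')=[\Gamma:\Gamma']\cdot\betti_i(\Gamma)$ quoted before the corollary. Conjugation gives an isomorphism $\Lambda^{\gamma_0}\cong\Lambda$, so $\betti_i(\Lambda^{\gamma_0})=\betti_i(\Lambda)=0$ for every $i\in\{0,\ldots,n\}$ by the assumption on $\Lambda$. Combining this with the finite-index inclusion $\Lambda^\omega<\Lambda^{\gamma_0}$ yields
\[
\betti_i(\Lambda^\omega)=[\Lambda^{\gamma_0}:\Lambda^\omega]\cdot\betti_i(\Lambda^{\gamma_0})=0
\]
for every $i\in\{0,\ldots,n\}$ and every $\omega$. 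This is precisely the hypothesis of Theorem~\ref{thm: vanishing for groups with amenable normal subgroups}, whose conclusion then gives $\betti_i(\Gamma)=0$ for all $i\in\{0,\ldots,n\}$.

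The only delicate point is the finite-index bookkeeping in the second paragraph: one must be sure that passing through the conjugating automorphism and then intersecting $k$ finite-index subgroups genuinely produces a subgroup of finite index in $\Lambda^{\gamma_0}$, so that the index $[\Lambda^{\gamma_0}:\Lambda^\omega]$ appearing in the formula is finite and the product is honestly zero. This rests on the transitivity of commensurability together with the stability of the finite-index relation under automorphisms and finite intersections; both are elementary, but must be tracked carefully across the $k+1$ conjugates. Everything else is a direct application of the preceding theorem and the finite-index formula.
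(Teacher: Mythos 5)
Your proof is correct and follows exactly the route the paper intends: the paper disposes of this corollary in one sentence by citing Theorem~\ref{thm: vanishing for groups with amenable normal subgroups} together with the multiplicativity relation $\betti_i(\Gamma')=[\Gamma:\Gamma']\cdot\betti_i(\Gamma)$, and your argument simply supplies the (correct) finite-index bookkeeping showing $\Lambda^\omega$ has finite index in $\Lambda^{\gamma_0}\cong\Lambda$.
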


The theorem above implies together with the vanishing of $L^2$-Betti numbers 
of infinite amenable groups~\cite{cheeger+gromov} the following. 

\begin{corollary}\label{cor: vanishing for n-step s-normal amenable subgroups}
	Let $\Lambda<\Gamma$ be an $n$-step s-normal and amenable subgroup. Then the $L^2$-Betti numbers of $\Gamma$ vanish up to degree~$n$, that is, $\betti_i(\Gamma)=0$ for
	every $i\in\{0,\ldots, n\}$.
\end{corollary}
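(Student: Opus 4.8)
The plan is to deduce this directly from Theorem~\ref{thm: vanishing for groups with amenable normal subgroups} by verifying its hypothesis. So I must check that $\betti_i(\Lambda^\omega)=0$ for all integers $i,k\ge 0$ with $i+k\le n$ and every $\omega\in\Gamma^{k+1}$. Since $i\ge 0$, the bound $i+k\le n$ forces $k\le n$, so only tuples $\omega$ of length at most $n+1$ occur. It therefore suffices to show that each such $\Lambda^\omega$ is an \emph{infinite amenable} group, because the vanishing of all $L^2$-Betti numbers of an infinite amenable group is exactly the content of the theorem of Cheeger and Gromov~\cite{cheeger+gromov}.

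First I would record amenability. For $\omega=(\gamma_0,\dots,\gamma_k)$ the group $\Lambda^\omega=\Lambda^{\gamma_0}\cap\dots\cap\Lambda^{\gamma_k}$ is a subgroup of the conjugate $\Lambda^{\gamma_0}=\gamma_0^{-1}\Lambda\gamma_0$, which is amenable because it is isomorphic to the amenable group $\Lambda$. Since subgroups of amenable groups are amenable, $\Lambda^\omega$ is amenable.

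Next I would establish infiniteness from the $n$-step s-normality hypothesis. The definition asserts only that intersections of exactly $n+1$ conjugates are infinite, so for a tuple $\omega=(\gamma_0,\dots,\gamma_k)$ with $k<n$ I would pad it to length $n+1$ by repeating entries, for instance $\omega'=(\gamma_0,\dots,\gamma_k,\gamma_0,\dots,\gamma_0)$. Repeating a conjugating element introduces no new intersectand, so $\Lambda^{\omega'}=\Lambda^\omega$, and applying $n$-step s-normality to the length-$(n+1)$ tuple $\omega'$ shows this common group is infinite. For $k=n$ the tuple already has the maximal length and the definition applies directly. Hence $\Lambda^\omega$ is infinite for every tuple of length at most $n+1$.

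Combining the two steps, each $\Lambda^\omega$ arising in the hypothesis of Theorem~\ref{thm: vanishing for groups with amenable normal subgroups} is infinite and amenable, so $\betti_i(\Lambda^\omega)=0$ for all $i\ge 0$ by~\cite{cheeger+gromov}; in particular this holds whenever $i+k\le n$. The hypothesis of the theorem is thereby verified, and it yields $\betti_i(\Gamma)=0$ for every $i\in\{0,\dots,n\}$, as claimed. I expect no serious obstacle here: the argument is essentially a matter of reducing to the theorem, and the only point demanding a moment's care is the padding step, which is needed precisely because $n$-step s-normality is phrased for tuples of the maximal length $n+1$ whereas the theorem's hypothesis ranges over all shorter tuples as well.
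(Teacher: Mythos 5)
Your proposal is correct and is exactly the argument the paper intends: the corollary is stated as an immediate consequence of Theorem~\ref{thm: vanishing for groups with amenable normal subgroups} together with the Cheeger--Gromov vanishing for infinite amenable groups, and you verify the theorem's hypothesis in the only way available. The padding step (or, equivalently, noting that an intersection of fewer conjugates contains an intersection of $n+1$ of them) is the right small detail to make explicit.
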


By taking a suitable subgroup $\Lambda$ inside the special 
linear group $\Gamma=\SL_n(R)$ over a ring $R$, 
Theorem~\ref{thm: vanishing for groups with amenable normal subgroups} yields 
the following application (proved in Subsection~\ref{subsec: Sl and El}). 

\begin{theorem}\label{thm: application to SL}
	Let $R$ be an infinite integral domain. Let $n\ge 2$. Then 
	\[
		\betti_i(\SL_n(R))=0\text{ for every $i\in\{0,\ldots,n-2\}$.} 
	\]
\end{theorem}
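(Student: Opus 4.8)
The plan is to apply the main theorem (Theorem~\ref{thm: vanishing for groups with amenable normal subgroups}) to $\Gamma=\SL_n(R)$ with the subgroup
\[
\Lambda=\{\,g\in\SL_n(R): \im(g-I)\subseteq H\,\},\qquad H=\Ker(e_n^*),
\]
the stabilizer of the functional $e_n^*$, i.e. the matrices whose last row is $(0,\dots,0,1)$. Concretely $\Lambda\cong\SL_{n-1}(R)\ltimes R^{n-1}$, which is \emph{not} amenable once $n\ge 3$, so Corollary~\ref{cor: vanishing for n-step s-normal amenable subgroups} does not apply and one really needs the full strength of Theorem~\ref{thm: vanishing for groups with amenable normal subgroups}. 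By that theorem it suffices to prove $\betti_i(\Lambda^\omega)=0$ for all $i,k\ge 0$ with $i+k\le n-2$ and every $\omega\in\Gamma^{k+1}$; in fact I would show the stronger statement that \emph{all} $L^2$-Betti numbers of $\Lambda^\omega$ vanish whenever $k\le n-2$.

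The key step is to identify the conjugate intersections. Since $\im\bigl((g-I)\gamma^{-1}\bigr)=\im(g-I)$ for invertible $\gamma$, one computes $\Lambda^\gamma=\{g:\im(g-I)\subseteq\gamma^{-1}H\}$, and hence
\[
\Lambda^\omega=\Bigl\{\,g:\im(g-I)\subseteq W\,\Bigr\},\qquad W=\bigcap_{i=0}^{k}\Ker\phi_i,\quad \phi_i=e_n^*\circ\gamma_i .
\]
Each $\phi_i$ is a nonzero functional, so over the fraction field $K$ the space $W\otimes K$ is an intersection of $k+1$ hyperplanes in $K^n$; thus $\operatorname{rank}_R W=\dim_K(W\otimes K)\ge n-(k+1)\ge 1$ for $k\le n-2$. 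I would then exhibit inside $\Lambda^\omega$ the abelian subgroup
\[
N=\{\,g:\im(g-I)\subseteq W\subseteq\Ker(g-I)\,\}\cong\operatorname{Hom}_R(R^n/W,\,W),
\]
consisting of unipotent elements with $(g-I)^2=0$, the group law corresponding to addition of homomorphisms. A short verification shows $N$ is normal in $\Lambda^\omega$: every $g\in\Lambda^\omega$ satisfies $gW=W$ and acts trivially on $R^n/W$, which preserves both defining conditions of $N$. Moreover $N$ is infinite, since $W\ne 0$ contains a nonzero element while $R^n/W\cong\im(\phi_0,\dots,\phi_k)$, a nonzero submodule of $R^{k+1}$, carries a nonzero functional to $R$, producing a nonzero $\eta$ and hence a copy of $R$ inside $N$. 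Therefore $\Lambda^\omega$ contains an infinite amenable normal subgroup, and the classical vanishing theorem (\cite{cheeger+gromov} together with the normal-subgroup principle recalled in the introduction) yields $\betti_i(\Lambda^\omega)=0$ for all $i$.

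Feeding this into Theorem~\ref{thm: vanishing for groups with amenable normal subgroups} gives $\betti_i(\SL_n(R))=0$ for $i\in\{0,\dots,n-2\}$; note that the range $k\le n-2$ is precisely what forces $W\ne 0$, the same arithmetic that exhibits $\Lambda$ as an $(n-2)$-step s-normal subgroup. I expect the main obstacle to be the module theory over a general integral domain: $W$, $R^n/W$, and the relevant quotients need not be free or projective, so one cannot split $\SL_n(R)$ into a clean block form adapted to $W$. I would circumvent this by using ranks over the fraction field for the dimension bound and the intrinsic description $N\cong\operatorname{Hom}_R(R^n/W,W)$ for the infinite amenable normal subgroup, never assuming an adapted basis.
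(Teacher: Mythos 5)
Your proof is correct and takes essentially the same route as the paper's Theorem~4.1: choose a parabolic-type subgroup $\Lambda$, show that the intersection of at most $n-1$ conjugates contains an infinite abelian group of rank-one transvections which is normal because $\Lambda^\omega$ preserves the relevant submodule and acts trivially on the quotient, and conclude via Cheeger--Gromov and Theorem~\ref{thm: vanishing for groups with amenable normal subgroups}. The only difference is a dual choice of data: the paper stabilizes the line $Ke_1$ and produces transvections with image in that line and kernel containing the span of the vectors $\gamma_i^{-1}e_1$, while you stabilize the covector $e_n^*$ and produce transvections with image in $W=\bigcap\Ker\phi_i$ and kernel containing $W$; your intrinsic packaging $N\cong\operatorname{Hom}_R(R^n/W,\,W)$ is a clean variant that even avoids the denominator-clearing step the paper performs on its functional $\phi$.
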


In addition, we have a statement about degree $n-1$: 

\begin{theorem}\label{thm: extended application to SL}
Assume that a ring $R$ satisfies at least one of the following properties: 
\begin{enumerate}
\item $R$ is an infinite field.
\item
$R$ is a subring of the field $F(t)$ of rational functions over a finite field $F$, and 
$R$ contains an invertible element $\alpha$ that is not a root of unity.
\item $R$ is a subring of the field $\bar{\bbQ}$ of algebraic numbers, and 
$R$ contains an invertible element $\alpha$ that is not a root of unity.
\end{enumerate}
Then one has 
	\[
		\betti_{n-1}(\SL_n(R))=0.
	\]
\end{theorem}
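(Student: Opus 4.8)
The plan is to realize the degree-$(n-1)$ vanishing as another application of Theorem~\ref{thm: vanishing for groups with amenable normal subgroups}, now with $\Gamma=\SL_n(R)$ and a carefully chosen abelian (hence amenable) subgroup $\Lambda$ that is $(n-1)$-step s-normal. Since Theorem~\ref{thm: application to SL} already gives the vanishing in degrees $0,\dots,n-2$ for any infinite integral domain, the only new content in degree $n-1$ is finding a subgroup $\Lambda$ for which the intersections $\Lambda^\omega$ over all $n$-tuples $\omega$ remain \emph{infinite}, since these intersections are abelian and the $L^2$-Betti numbers of infinite amenable groups vanish. Concretely, I would take $\Lambda$ to be a diagonal torus of the form
\[
	\Lambda=\left\{\operatorname{diag}(\alpha^{k_1},\ldots,\alpha^{k_n})\ :\ k_1+\cdots+k_n=0\right\}<\SL_n(R),
\]
where $\alpha\in R^\times$ is the prescribed element of infinite multiplicative order (in case (1) of an infinite field one may instead choose any $\alpha$ of infinite order inside $R^\times$, which exists because an infinite field has infinitely many invertible elements; the finiteness obstruction to a single element having infinite order is exactly what hypotheses (2) and (3) are engineered to rule out). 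This $\Lambda$ is isomorphic to $\ZZ^{n-1}$ and thus infinite amenable.

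The key step is then to show that for every $\omega=(\gamma_0,\ldots,\gamma_{n-1})\in\Gamma^{n}$ the intersection $\Lambda^{\gamma_0}\cap\cdots\cap\Lambda^{\gamma_{n-1}}$ is infinite; by Corollary~\ref{cor: vanishing for n-step s-normal amenable subgroups} this gives vanishing up to degree $n-1$ and in particular $\betti_{n-1}(\SL_n(R))=0$. Each conjugate $\Lambda^{\gamma_j}$ is a torus that is diagonalized in the basis determined by the columns of $\gamma_j$; equivalently it is the group of elements fixing the $n$ lines $\ell_1^{(j)},\ldots,\ell_n^{(j)}$ spanned by those columns and acting on them by the characters $v\mapsto \alpha^{k_i}v$. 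The intersection over $j=0,\dots,n-1$ consists of those elements of $\SL_n$ (over the fraction field, say) that simultaneously preserve all $n$ of the flags/decompositions and act by a power of $\alpha$ on each common eigenline. I would argue that this intersection always contains a copy of $\ZZ$ (or at least an infinite subgroup): the generic expectation is that $n$ generic frames in $\mathbb{P}^{n-1}$ are in "general position," and one can still find a one-parameter family of simultaneous scalings, because the constraint is governed by the combinatorics of which lines of different frames coincide or lie in common subspaces.

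The hard part, and the place where I expect the real work to live, is precisely this intersection computation: proving that even for the \emph{worst-case} configuration of the $n$ conjugating elements the simultaneous stabilizer remains infinite. The subtlety is that with $n$ conjugates of an $(n-1)$-dimensional torus inside the $(n-1)$-dimensional diagonal, a naive dimension count ($(n-1)$ conditions from each of $n$ frames) would suggest the intersection could collapse to a finite group, so the argument cannot be purely dimensional; it must exploit that all conjugates lie in the \emph{same} $G$-orbit of a fixed torus and that the eigenvalue constraints are all powers of the single element $\alpha$. I would try to reduce to a linear-algebra statement over the fraction field of $R$ about common eigenvectors of commuting semisimple elements, and then show that the group of admissible exponent vectors $(k_1,\ldots,k_n)\in\ZZ^n$ compatible with all $n$ frames is a subgroup of $\ZZ^{n}$ of positive rank. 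The role of the arithmetic hypotheses (2)–(3) is then to guarantee that a nontrivial such exponent vector actually produces a matrix with \emph{infinite order} rather than being killed by $\alpha$ being a root of unity, which is why an arbitrary integral domain is not enough for degree $n-1$ while it sufficed through degree $n-2$ in Theorem~\ref{thm: application to SL}.
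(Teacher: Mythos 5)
There is a genuine gap, and it sits exactly where you expected the real work to live: the diagonal torus $\Lambda=\{\operatorname{diag}(\alpha^{k_1},\dots,\alpha^{k_n}):\sum k_i=0\}$ is \emph{not} $(n-1)$-step s-normal; in fact already the intersection of \emph{two} suitably chosen conjugates is trivial. Suppose $x\in\Lambda\cap\Lambda^{\gamma}$ is not scalar. Since $x\in\Lambda$, every eigenspace of $x$ is a coordinate subspace for the standard basis; since $\gamma x\gamma^{-1}\in\Lambda$, every eigenspace of $x$ is also the $\gamma^{-1}$-image of a coordinate subspace. If $\gamma$ is chosen so that no proper nonzero coordinate subspace coincides with a $\gamma^{-1}$-image of one (such $\gamma$ exist in $\SL_n(R)$ for any infinite $R$, since only finitely many coincidences must be avoided), then $x$ must be scalar, $x=\alpha^kI$ with $nk=0$ in the exponent lattice, hence $x=1$ because $\alpha$ is not a root of unity. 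So $\Lambda\cap\Lambda^\gamma=\{1\}$ for such $\gamma$: the ``generic expectation'' of a surviving one-parameter family of simultaneous scalings is exactly backwards, and Corollary~\ref{cor: vanishing for n-step s-normal amenable subgroups} cannot be applied to this $\Lambda$ (for $n=2$ it would not even yield $\betti_1=0$). A secondary problem: in case (1) an infinite field need not contain any element of infinite multiplicative order (take $R=\overline{\mathbb{F}}_p$), so your $\Lambda$ need not even be infinite.

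The paper's route is structurally different and explains why the hypotheses take the form they do. It does not require $\Lambda$ itself to be amenable: it takes $\Lambda=\SL_n(R)\cap Q$, where $Q$ is the stabilizer of the line $Ke_1$, and proves (Theorem~\ref{main-application}) that for every $\omega\in\Gamma^{k}$ with $k\le n$ the group $\Lambda^\omega$ contains an \emph{infinite amenable normal subgroup}, namely $\Lambda^\omega\cap S$ where $S=\ker\bigl(Q\to\PGL(V/V_1)\bigr)$ is two-step solvable. When the vectors $\gamma_i^{-1}e_1$ fail to span $V_2=\spn\{e_2,\dots,e_n\}$ (automatic for $k<n$), this subgroup contains a copy of the additive group of $R$ consisting of transvections $T_r$; in the remaining case $k=n$, where they do span, one instead produces infinitely many elements $S_x$ indexed by invertible $x$ with $x^n-1$ in a suitable principal ideal --- and this is precisely where conditions (1)--(3) enter, via Proposition~\ref{algebraic-numbers}. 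Cheeger--Gromov then gives $\betti_j(\Lambda^\omega)=0$ for all $j$, and Theorem~\ref{thm: vanishing for groups with amenable normal subgroups} applies directly. Your instinct that the arithmetic hypotheses are needed to defeat a worst-case configuration appearing only in degree $n-1$ is correct, but the worst case is handled by enlarging the unipotent part of $\Lambda$, not by intersecting conjugate tori.
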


If $\SL_n(R)$ is a lattice in a semisimple Lie group, e.g. in the case $R=\bbZ$ or, 
more generally, $R$ being a subring of algebraic \emph{integers},  
then much more is known than in the preceding theorems. It follows from results 
of Borel, which rely on global analysis on the associated symmetric space, that the $L^2$-Betti numbers vanish except possibly in the middle dimension 
of the symmetric space~\cite{borel, olbrich}. However, 
the interesting and new case of the preceding theorems is the one where 
$\SL_n(R)$ is \emph{not} a lattice in a semisimple Lie group; take e.g. $R=\bbZ[x_1,x_2,\ldots, x_d]$. According to results of 
Y.~Shalom~\cite{shalom} and L.~Vaserstein the so-called 
\emph{universal lattice} $\SL_n(\bbZ[x_1,\ldots, x_d])$ has property (T) provided 
that $n\ge 3$; M.~Mimura~\cite{mimura} showed that for $n\ge 4$ the universal lattice has even property $F_{L^p}$, $p\in (1,\infty)$, as defined by Bader-Furman-Gelander-Monod. 
M.~Ershov and A.~Jaikin-Zapirain showed property $(T)$ for the 
groups $\EL_n(\bbZ\langle x_1,\ldots, x_d\rangle)$, $n\ge 3$, of \emph{noncommutative 
universal lattices}~\cite{ershov}. 

Of course, property (T) implies the vanishing of the first $L^2$-Betti number, but nothing was known before about the $L^2$-Betti numbers of universal lattices in 
higher degrees.

The following application of Theorem~\ref{thm: vanishing for groups with amenable normal subgroups} (proved in Subsection~\ref{subsec: Thompson}) was kindly pointed to us by Nicolas Monod remarking on an earlier draft of this paper.

\begin{theorem} \label{Thompson}
All $L^2$-Betti numbers of Thompson's groups $F$ and $T$ vanish.
\end{theorem}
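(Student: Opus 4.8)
The plan is to apply Theorem~\ref{thm: vanishing for groups with amenable normal subgroups} twice, first to $F$ and then to $T$, in each case taking $\Lambda$ to be a point stabilizer. Realize $F$ as the group of dyadic piecewise-linear orientation-preserving homeomorphisms of $[0,1]$ and $T$ as the analogous group of the circle $\bbR/\bbZ$. The two standard structural facts I would use are: (i) the stabilizer in $F$ of an interior dyadic point $p$ splits as a product $F\times F$ (the Thompson groups of $[0,p]$ and $[p,1]$), and more generally the stabilizer of $m\ge 1$ interior points is isomorphic to a product of $m+1$ copies of $F$; (ii) the stabilizer in $T$ of a point of $\bbR/\bbZ$ is isomorphic to $F$, and the stabilizer of $m\ge 1$ points is a product of $m$ copies of $F$, one per complementary arc. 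Since every element of $F$ carries interior dyadic points to interior dyadic points (and $T$ permutes dyadic points of the circle), conjugating a point stabilizer yields the stabilizer of the image point; hence intersections of conjugates are again stabilizers of finite sets of dyadic points, of the product form above, and in particular are infinite. Thus both point stabilizers are $n$-step s-normal for every $n$.

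The heart of the matter is $F$, which I would handle by induction on the degree using $\Lambda=\Stab_F(1/2)\cong F\times F$. The base case $\betti_0(F)=0$ holds because $F$ is infinite. For the inductive step, fix $n\ge 1$ and assume $\betti_i(F)=0$ for all $i<n$, and apply Theorem~\ref{thm: vanishing for groups with amenable normal subgroups} with $\Gamma=F$ and this $\Lambda$. For $\omega\in F^{k+1}$ the intersection $\Lambda^\omega$ is the stabilizer of at least one and at most $k+1$ interior points, hence is isomorphic to a product $F^r$ with $r\ge 2$. By the Kuenneth formula for $L^2$-Betti numbers, $\betti_i(F^r)=\sum_{i_1+\cdots+i_r=i}\prod_l \betti_{i_l}(F)$, and a summand can be nonzero only if every factor is, which by the inductive hypothesis forces each $i_l\ge n$ and therefore $i=\sum_l i_l\ge rn\ge 2n>n$. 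Since the hypothesis of the theorem only concerns degrees $i$ with $i\le i+k\le n$, every such $\betti_i(\Lambda^\omega)$ vanishes, so the hypothesis is verified and the theorem yields $\betti_i(F)=0$ for all $i\le n$, in particular $\betti_n(F)=0$. This completes the induction and shows that all $L^2$-Betti numbers of $F$ vanish.

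For $T$ the circularity disappears: take $\Lambda=\Stab_T(0)\cong F$. Every intersection of conjugates is a product $F^m$ with $m\ge 1$, and since all $L^2$-Betti numbers of $F$ have just been shown to vanish, the Kuenneth formula gives $\betti_i(F^m)=0$ for all $i$. Hence the hypothesis of Theorem~\ref{thm: vanishing for groups with amenable normal subgroups} holds for every $n$, and we conclude $\betti_i(T)=0$ for all $i$.

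The one genuinely delicate point is the apparent circularity in the argument for $F$: the candidate s-normal subgroup is assembled from copies of $F$ itself, so a naive application of the theorem would demand the very vanishing we are trying to establish. What rescues the induction is that the relevant intersections are products of \emph{at least two} copies of $F$; the Kuenneth formula then expresses $\betti_i(\Lambda^\omega)$ as a sum of products of two or more lower-degree $L^2$-Betti numbers of $F$, and—using only $\betti_0(F)=0$ together with the inductive vanishing below degree $n$—each such product is forced to vanish throughout the range $i\le n$, with no information about $\betti_n(F)$ required. Verifying the splitting of point stabilizers and the behaviour of conjugation is routine from the definitions of $F$ and $T$.
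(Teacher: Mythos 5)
Your proof is correct and rests on the same structural facts as the paper's: intersections of conjugates of a dyadic point stabilizer are again stabilizers of finite sets of dyadic points, hence products of copies of $F$, and the K\"unneth formula plus Theorem~\ref{thm: vanishing for groups with amenable normal subgroups} finishes the job. The one genuine difference is how the two arguments break the circularity you correctly identify. You fix $\Lambda=\Stab_F(1/2)\cong F\times F$ once and for all and run an induction on the degree $n$, using the inductively known vanishing of $\betti_i(F)$ for $i<n$ to kill every K\"unneth summand in degrees $\le n$. The paper instead lets $\Lambda$ depend on $n$: it takes the stabilizer of $n+1$ dyadic points, so that every $\Lambda^\omega\cong F^d$ with $d\ge n+1$; then in any K\"unneth decomposition of $\betti_i(F^d)$ with $i\le n\le d-1$ the pigeonhole principle forces some factor to be $\betti_0(F)=0$, and no induction (indeed no information about $F$ beyond its infiniteness) is needed. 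The paper's choice buys a slightly cleaner, non-recursive argument that would apply verbatim with $F$ replaced by any infinite group in the product decomposition; your version buys a fixed, more natural subgroup $\Lambda$ at the cost of an induction. Your treatment of $T$ via $\Stab_T(0)\cong F$ is likewise fine once the result for $F$ is in hand, whereas the paper again uses a many-point stabilizer so as not to need the $F$ case first; both are valid.
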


The groups $F$ and $T$ were invented by R.~Thompson in~ 1965. 
In unpublished work Thompson proved that the 
group $T$ is a finitely presented, infinite, simple group. 
The vanishing of $L^2$-Betti numbers for Thompson's group~$F$ was proved before in a 
different way 
by L\"uck~\cite{lueck-book}*{Theorem~7.10. on p.~298}.

\begin{remark}\label{rem: amenable radical}
	In a forthcoming paper~\cite{envelopes} we show that, if a locally compact group $G$ has 
	a non-compact amenable radical, then every lattice of $G$ has an infinite amenable commensurated subgroup. In particular, every lattice of $G$ has vanishing 
	$L^2$-Betti numbers by a theorem of Cheeger-Gromov~\cite{cheeger+gromov} and 
	Corollary~\ref{cor: vanishing for almost normal subgroups}.
\end{remark}

\begin{example}
	The subgroup $\bbZ\cong\langle x\rangle$ of the Baumslag-Solitar group 
		\[
			BS(p,q)=\langle x,t~|~tx^pt^{-1}=x^q\rangle
		\]
	is commensurated but not normal. 
	Corollary~\ref{cor: vanishing for almost normal subgroups} 
	yields that the $L^2$-Betti numbers of $BS(p,q)$ vanish. This result is part of  
	earlier work of W.~Dicks and P.~Linnell~\cite{dicks+linnell} about 
	$L^2$-Betti numbers of one-relator groups. 
\end{example}

{\bf Acknowledgement:} We warmly thank Nicolas Monod for pointing to us that Theorem~\ref{Thompson} follows from Theorem~\ref{thm: vanishing for groups with amenable normal subgroups}.
R.S. wishes to thanks the Mittag-Leffler institute and U.B wishes to thank the math department of the University of Orleans for their hospitality while the final work on this paper was done.

\section{$L^2$-cohomology} 
\label{sec:review}

Our background reference for $L^2$-Betti numbers is~\cite{lueck-book}. 
$L^2$-Betti numbers have various definitions with different levels of generality. A modern and algebraic description that applies to arbitrary groups was
given by W.~L\"uck~\cite{lueck-dimension}. He introduced a dimension function for arbitrary modules over the group von
Neumann algebra $L(\Gamma)$ and showed that the $i$-th $L^2$-Betti number $\betti_i(\Gamma)$ in the sense of
Cheeger-Gromov~\cite{cheeger+gromov} can be expressed
as
\[
	\betti_i(\Gamma)=\dim_{L(\Gamma)}H_i(\Gamma, L(\Gamma)).
\]
The dimension function $\dim_{L(\Gamma)}$ extends to a dimension
function $\dim_{\calU(\Gamma)}$ for modules over
the algebra $\calU(\Gamma)$ of densely defined, closed operators affiliated to $L(\Gamma)$ in the sense that
\[
	\dim_{{L(\Gamma)}}(M)=\dim_{\calU(\Gamma)}\bigl(\calU(\Gamma)\otimes_{{L(\Gamma)}} M\bigr)
\]
for every ${L(\Gamma)}$-module~$M$~\cite{reich}*{Proposition~3.8}. 
 One has~\cite{reich}*{Proposition~5.1}
\begin{equation}\label{eq: betti homology}
	\betti_i(\Gamma)=\dim_{\calU(\Gamma)}H_i\bigl(\Gamma, \calU(\Gamma)\bigr).
\end{equation}
We refer to~~\cite{lueck-book}*{Chapter~8} for more information about this way of defining $L^2$-Betti numbers.
The algebra $\calU(\Gamma)$ of affiliated operators is a \emph{self-injective} ring, that is, the functor $M\mapsto \hom_{\calU(\Gamma)}(M, \calU(\Gamma))$
is exact~\cite{berberian}. A.~Thom firstly exploited this property for the computation of
$L^2$-invariants~\cite{thom}. Later we need the following lemma. 

\begin{lemma}\label{ref:l2 induction}
	Let $\Lambda<\Gamma$ be a subgroup. If $\betti_i(\Lambda)=0$, then 
	\[
		H^i(\Lambda, \calU(\Gamma))=0. 
	\]
\end{lemma}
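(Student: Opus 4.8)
The plan is to reduce the vanishing of the cohomology \emph{module} $H^i(\Lambda,\calU(\Gamma))$ to the vanishing of a von Neumann \emph{dimension}, and to bridge the two using the self-injectivity of $\calU(\Gamma)$ emphasized just above. First I would compute the dimension of the corresponding homology. Restricting the left $\Gamma$-action on $\calU(\Gamma)$ to $\Lambda$ and using that $\calU(\Gamma)$ is flat over $\calU(\Lambda)$, together with the induction formula $\dim_{\calU(\Gamma)}\bigl(\calU(\Gamma)\otimes_{\calU(\Lambda)}N\bigr)=\dim_{\calU(\Lambda)}N$, one obtains
\[
\dim_{\calU(\Gamma)}H_i(\Lambda,\calU(\Gamma))=\dim_{\calU(\Lambda)}H_i(\Lambda,\calU(\Lambda))=\betti_i(\Lambda)=0.
\]
Concretely, if $C_*$ is a free resolution of $\ZZ$ over $\ZZ\Lambda$, flatness lets me pull $\calU(\Gamma)\otimes_{\calU(\Lambda)}(-)$ through the homology of the complex $\calU(\Lambda)\otimes_{\ZZ\Lambda}C_*$.

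Next I would set up a duality between cohomology and homology. For each free $\ZZ\Lambda$-module $C_i$ the extension-of-scalars adjunction for the ring map $\ZZ\Lambda\to\calU(\Gamma)$ gives a natural isomorphism
\[
\hom_{\ZZ\Lambda}(C_i,\calU(\Gamma))\cong\hom_{\calU(\Gamma)}\bigl(\calU(\Gamma)\otimes_{\ZZ\Lambda}C_i,\calU(\Gamma)\bigr),
\]
so the cochain complex computing $H^i(\Lambda,\calU(\Gamma))$ is identified with the $\calU(\Gamma)$-dual of the chain complex $\calU(\Gamma)\otimes_{\ZZ\Lambda}C_*$ computing $H_*(\Lambda,\calU(\Gamma))$. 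Since $\calU(\Gamma)$ is self-injective, the contravariant functor $\hom_{\calU(\Gamma)}(-,\calU(\Gamma))$ is exact and therefore commutes with passage to (co)homology, yielding a natural isomorphism
\[
H^i(\Lambda,\calU(\Gamma))\cong\hom_{\calU(\Gamma)}\bigl(H_i(\Lambda,\calU(\Gamma)),\calU(\Gamma)\bigr).
\]
Note that no finiteness hypothesis on $\Lambda$ is needed, since everything is phrased as the dual of the homology complex rather than as an inverse limit of cochains.

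Finally I would invoke the internal structure of $\calU(\Gamma)$. It is a standard property of $\calU(\Gamma)$ that a module $N$ with $\dim_{\calU(\Gamma)}N=0$ admits no nonzero homomorphism to $\calU(\Gamma)$: the image of any such map would be a nonzero submodule of $\calU(\Gamma)$, hence of strictly positive dimension, contradicting that it is a quotient of the zero-dimensional module $N$ (dimension being monotone). Applying this to $N=H_i(\Lambda,\calU(\Gamma))$, whose dimension vanishes by the first step, the displayed isomorphism forces $H^i(\Lambda,\calU(\Gamma))=0$.

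I expect the main obstacle to lie in the first step rather than in the duality. The clean identity $\dim_{\calU(\Gamma)}H_i(\Lambda,\calU(\Gamma))=\betti_i(\Lambda)$ rests on the flatness of $\calU(\Gamma)$ over $\calU(\Lambda)$ and on the good behaviour of L\"uck's dimension under induction, which must be cited or checked with care; by contrast, once self-injectivity is in hand the duality isomorphism and the concluding vanishing are purely formal.
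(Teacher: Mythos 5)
Your proposal follows exactly the paper's argument: compute $\dim_{\calU(\Gamma)}H_i(\Lambda,\calU(\Gamma))=\betti_i(\Lambda)=0$ via flatness of $\calU(\Gamma)$ over the von Neumann regular ring $\calU(\Lambda)$ and dimension-compatibility of induction, then identify $H^i(\Lambda,\calU(\Gamma))$ with $\hom_{\calU(\Gamma)}\bigl(H_i(\Lambda,\calU(\Gamma)),\calU(\Gamma)\bigr)$ using self-injectivity, and conclude since zero-dimensional modules admit no nonzero maps to $\calU(\Gamma)$ (the paper cites Thom's Corollary~3.3 for this last point, which you instead sketch directly). The proof is correct and takes essentially the same route.
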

\begin{proof}
	The ring $\calU(\Lambda)$ is von Neumann regular~\cite{lueck-book}*{Theorem~8.22 on p.~327}. Thus $\calU(\Gamma)$ is a flat $\calU(\Lambda)$-module~\cite{lueck-book}*{Lemma~8.18 on p.~326}. So we have 
	\[
		H_i(\Lambda, \calU(\Gamma))\cong \calU(\Gamma)\otimes_{\calU(\Lambda)} H_i(\Lambda, \calU(\Lambda)). 
	\]
	Uniqueness of $\dim_{\calU(\Lambda)}$-dimension~\cite{reich}*{Theorem~3.11} 
	and flatness of the functor $\calU(\Gamma)\otimes_{\calU(\Lambda)}\_$ yield that for any 
	$\calU(\Lambda)$-module $M$ we have 
	\[
	\dim_{\calU(\Gamma)}\bigl(\calU(\Gamma)\otimes_{\calU(\Lambda)}M\bigr)=\dim_{\calU(\Lambda)}(M).
	\]
	In particular, it follows that 
	\[
		\dim_{\calU(\Gamma)}\bigl(H_i(\Lambda, \calU(\Gamma))\bigr)=
		\dim_{\calU(\Lambda)}\bigl(H_i(\Lambda, \calU(\Lambda))\bigr)\overset{\eqref{eq: betti homology}}{=}\betti_i(\Lambda)=0. 
	\]
	By~\cite{thom}*{Corollary~3.3} this yields that 
	\[
		\hom_{\calU(\Gamma)}\bigl(H_i(\Lambda, \calU(\Gamma)), \calU(\Gamma)\bigr)=0.
	\]
	Since $\calU(\Gamma)$ is self-injective, as mentioned above, the latter module is 
	isomorphic to $H^i(\Lambda, \calU(\Gamma))$. 
\end{proof}

\section{Proof of Theorem~\ref{thm: vanishing for groups with amenable normal subgroups}}
\label{sec: proof of main result}

For a $\bbC\Gamma$-module $M$, we use the notation 
\[
	M^\Gamma=\{m\in M~|~\gamma m=m\text{ for every $\gamma\in\Gamma$}\}. 
\]
For a subgroup $\Lambda<\Gamma$ and a $\bbC\Lambda$-module $M$, the $\bbC\Gamma$-module 
\[
	\coind_\Lambda^\Gamma(M):=\hom_{\bbC\Lambda}(\bbC\Gamma, M)
\]
given by the $\Gamma$-action 
\[
	(\gamma_0 f)(x)=f(x\gamma_0)\text{ for $f\in\coind_\Lambda^\Gamma(M)$ and $\gamma\in\Gamma$}
\]
is called the \emph{co-induced} $\bbC\Gamma$-module~\cite{brown-book}*{III.5}. For a 
$\bbC\Gamma$-module $N$ we denote the \emph{restriction} of $N$ to a $\bbC\Lambda$-module 
by $\res_\Lambda^\Gamma(N)$. We use the notation for the restriction only for 
emphasis; we often drop the $\res_\Lambda^\Gamma$-notation. 

\subsection{A sequence of modules for dimension-shifting} 
\label{sub:a_sequence_of_modules_for_dimension_shifting}

In the sequel let $\Gamma$ be a group, $\Lambda<\Gamma$ a subgroup, and let 
$$M_0=\calU(\Gamma)$$ 
regarded as a $\bbC\Gamma$-module. 
Starting with $M_0$, 
consider the following inductively defined sequence of $\bbC\Gamma$-modules,
whose study is motivated by the use of dimension-shifting in the proof of
Theorem~\ref{thm: vanishing for groups with amenable normal subgroups}.
\begin{equation}\label{eq: modules for dim shifting}
	M_{i+1}:=\coker\bigl(M_i\to\coind_\Lambda^\Gamma(\res_\Lambda^\Gamma(M_i))\bigr).
\end{equation}

The homomorphism $M_i\to\hom_{\bbC\Lambda}(\bbC\Gamma, M_i)$ for the cokernel is
$m\mapsto (\gamma\mapsto \gamma m)$; it is $\bbC\Gamma$-equivariant. So this
declares inductively the $\bbC\Gamma$-module structure on $M_i$.

\begin{lemma}\label{lem:algebraically mixing}
      Assume that for all integers $j,k\ge 0$ with $j+k\le n$ and for every 
	  $\omega\in\Gamma^{k+1}$ one has 
	  \[
	  	\betti_j(\Lambda^\omega)=0. 
	  \]
	  Then for all integers $i,j,k\ge 0$ with $i+j+k\le n$ and for every 
	  $\omega\in\Gamma^{k+1}$ one has 
	  \begin{align}
	  		H^j(\Lambda^\omega, \res_{\Lambda^\omega}^\Gamma(M_i))&=0,\label{eq:M_i formula}\\
	        H^j(\Lambda^\omega, \res_{\Lambda^\omega}^\Gamma(\coind_\Lambda^\Gamma(M_{i-1})))&=0\text{ if $i\ge 1$.}\label{eq:coind formula}
	  \end{align}
\end{lemma}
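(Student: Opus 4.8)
The plan is to prove the two formulas~\eqref{eq:M_i formula} and~\eqref{eq:coind formula} simultaneously by induction on $i$, using the Mackey decomposition of a restricted coinduced module as the bridge between them. For the base case $i=0$, formula~\eqref{eq:coind formula} is vacuous, and formula~\eqref{eq:M_i formula} reads $H^j(\Lambda^\omega, \calU(\Gamma))=0$ for all $j,k\ge 0$ with $j+k\le n$ and every $\omega\in\Gamma^{k+1}$. Since the hypothesis supplies $\betti_j(\Lambda^\omega)=0$ for every such triple, this is precisely Lemma~\ref{ref:l2 induction} applied to the subgroup $\Lambda^\omega<\Gamma$ (note $M_0=\calU(\Gamma)$).

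For the inductive step I assume~\eqref{eq:M_i formula} for $i-1$ and all admissible $j,k$, and first establish~\eqref{eq:coind formula} for $i$. The main tool is the Mackey decomposition of $\res_{\Lambda^\omega}^\Gamma\coind_\Lambda^\Gamma(M_{i-1})$ along $(\Lambda^\omega,\Lambda)$-double cosets, combined with Shapiro's lemma, which yields a natural isomorphism
\[
	H^j\bigl(\Lambda^\omega, \res_{\Lambda^\omega}^\Gamma\coind_\Lambda^\Gamma(M_{i-1})\bigr)\cong\prod_g H^j\bigl(\Lambda^\omega\cap\Lambda^g, M_{i-1}\bigr),
\]
where $g$ ranges over a set of double coset representatives and $M_{i-1}$ on the right carries the appropriate conjugated action. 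The essential observation is that $\Lambda^\omega\cap\Lambda^g=\Lambda^{\omega'}$ for the enlarged tuple $\omega'=(\gamma_0,\ldots,\gamma_k,g)\in\Gamma^{k+2}$, so this intersection is governed by the inductive hypothesis with $k$ replaced by $k+1$. Since $(i-1)+j+(k+1)=i+j+k\le n$, formula~\eqref{eq:M_i formula} for $i-1$ kills every factor, and because group cohomology commutes with arbitrary products in the coefficients, the whole product vanishes, giving~\eqref{eq:coind formula}.

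Finally I deduce~\eqref{eq:M_i formula} for $i$ from the short exact sequence underlying~\eqref{eq: modules for dim shifting}. The defining map $M_{i-1}\to\coind_\Lambda^\Gamma(\res_\Lambda^\Gamma(M_{i-1}))$, $m\mapsto(\gamma\mapsto\gamma m)$, is injective (evaluate at $\gamma$ the identity), so
\[
	0\to M_{i-1}\to\coind_\Lambda^\Gamma(\res_\Lambda^\Gamma(M_{i-1}))\to M_i\to 0
\]
is exact as $\bbC\Gamma$-modules, hence as $\bbC\Lambda^\omega$-modules after restriction. The associated long exact sequence in $H^*(\Lambda^\omega,-)$ sandwiches $H^j(\Lambda^\omega, M_i)$ between $H^j(\Lambda^\omega,\coind_\Lambda^\Gamma(M_{i-1}))$ and $H^{j+1}(\Lambda^\omega, M_{i-1})$. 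The former vanishes by~\eqref{eq:coind formula} for $i$ just proved, and the latter vanishes by~\eqref{eq:M_i formula} for $i-1$ in degree $j+1$, since $(i-1)+(j+1)+k=i+j+k\le n$. Hence $H^j(\Lambda^\omega, M_i)=0$, completing the induction.

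The main obstacle will be setting up the Mackey decomposition with the correct conventions, that is, tracking precisely which conjugate appears in each double coset summand and checking that the twisted coefficient module matches $M_{i-1}$, so that the intersection subgroups are honestly of the form $\Lambda^{\omega'}$ for a tuple exactly one longer than $\omega$. Once this is pinned down, the remainder is bookkeeping with the constraint $i+j+k\le n$, which is preserved in exactly the right way by each of the two reduction steps.
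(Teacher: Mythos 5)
Your proposal is correct and follows essentially the same route as the paper: base case via Lemma~\ref{ref:l2 induction}, then an induction on $i$ that proves~\eqref{eq:coind formula} by the Mackey double-coset decomposition plus Shapiro (identifying each intersection with a $\Lambda^{\omega'}$ for a tuple one longer, with the same degree count $i+j+k\le n$), and finally~\eqref{eq:M_i formula} from the long exact sequence of the defining short exact sequence. The only cosmetic difference is that the paper untwists the conjugated coefficient by transporting the cohomology group to $\gamma^{-1}\Lambda^\omega\gamma\cap\Lambda$, whereas you keep the group $\Lambda^\omega\cap\Lambda^g$ and note that the twist on $M_{i-1}$ is harmless because $M_{i-1}$ is a global $\Gamma$-module; these are equivalent.
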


\begin{proof}
We run an induction over $i\ge 0$. By Lemma~\ref{ref:l2 induction} 
the basis $i=0$ is equivalent to our assumption. Assume the statement is true for a fixed $i\ge 0$ and all $j,k\ge 0$ with 
$i+j+k\le n$. We show that the assertion holds for $i+1$ and all $j,k\ge 0$ 
with $i+1+j+k\le n$: 

The short exact sequence of $\bbC\Lambda^\omega$-modules 
\[
	0\to \res_{\Lambda^\omega}^\Gamma(M_i)\to \res_{\Lambda^\omega}^\Gamma(\coind_\Lambda^\Gamma(M_i))\to \res_{\Lambda^\omega}^\Gamma(M_{i+1})\to 0
\]
induces a long exact sequence in cohomology for which we consider the following part: 
\begin{multline*}
	\ldots\to H^j\bigl(\Lambda^\omega,  \res_{\Lambda^\omega}^\Gamma(\coind_\Lambda^\Gamma(M_i))\bigr)\to H^j\bigl(\Lambda^\omega, \res_{\Lambda^\omega}^\Gamma(M_{i+1})\bigr)\to\\\to H^{j+1}\bigl(\Lambda^\omega, \res_{\Lambda^\omega}^\Gamma(M_i)\bigr)\to\ldots
\end{multline*}
The homology group on the right vanishes by induction hypothesis. It remains to show 
that the homology group on the left vanishes. Mackey's double coset formula~\cite{brown-book}*{III.5} says that after a choice of 
a set $E$ of representatives of the double coset space $\Lambda^\omega\bs\Gamma/\Lambda$ we 
obtain an isomorphism of $\bbC\Lambda^\omega$-modules: 
\[
\res_{\Lambda^\omega}^\Gamma(\coind_\Lambda^\Gamma(M_i))\cong \prod_{\gamma\in E}\coind_{\Lambda^\omega\cap\Lambda^{\gamma^{-1}}}^{\Lambda^\omega}(\res_{\Lambda^\omega\cap\Lambda^{\gamma^{-1}}}^{\Lambda^{\gamma^{-1}}}(\gamma M_i)) 	
\]
Applying the Shapiro lemma and the induction hypothesis yields 
\begin{align*}
	H^j\bigl(\Lambda^\omega,  \res_{\Lambda^\omega}^\Gamma(\coind_\Lambda^\Gamma(M_i))\bigr) &=
	    \prod_{\gamma\in E} H^j\bigl(\Lambda^\omega, \coind_{\Lambda^\omega\cap\Lambda^{\gamma^{-1}}}^{\Lambda^\omega}(\res_{\Lambda^\omega\cap\Lambda^{\gamma^{-1}}}^{\Lambda^{\gamma^{-1}}}(\gamma M_i))\bigr)\\
	&= \prod_{\gamma\in E}H^j\bigl(\Lambda^\omega\cap\Lambda^{\gamma^{-1}}, \res_{\Lambda^\omega\cap\Lambda^{\gamma^{-1}}}^{\Lambda^{\gamma^{-1}}}(\gamma M_i)\bigr)\\
	&= \prod_{\gamma\in E}H^j\bigl(\gamma^{-1}\Lambda^\omega\gamma\cap\Lambda, \res_{\gamma^{-1}\Lambda^\omega\gamma\cap\Lambda}^{\Lambda}(M_i)\bigr)\\
	&= 0. \qedhere
\end{align*}
\end{proof}

\subsection{Conclusion of proof of Theorem~\ref{thm: vanishing for groups with amenable normal subgroups}} 
    Retain the setting of 
    Theorem~\ref{thm: vanishing for groups with amenable normal subgroups}. 
    It suffices 
	to verify that the restriction homomorphism 
	\[\res\colon H^i(\Gamma, M_0)\to H^i(\Lambda, M_0)\]
	is injective for every $i\in\{1,\ldots, n\}$. We employ the
	technique of dimension-shifting~\cite{brown-book}*{III.7}: 
	
	For $i,j\ge 0$ with $i+j\le n$ the Shapiro lemma and~\eqref{eq:M_i formula} yield 
	that 
	\[
		H^j\bigl(\Gamma, \coind_\Lambda^\Gamma(M_i)\bigr)\cong H^j(\Lambda, M_i)=0.
	\]
	From the long exact sequence 
	\begin{multline*}
		\ldots\to H^j\bigl(\Gamma, \coind_\Lambda^\Gamma(M_i)\bigr)\to 
		H^j\bigl(\Gamma, M_{i+1}\bigr)\xrightarrow{\partial} \\
	    H^{j+1}\bigl(\Gamma, M_i\bigr)\to H^{j+1}\bigl(\Gamma, \coind_\Lambda^\Gamma(M_i)\bigr)\to\ldots
	\end{multline*}
	one obtains, for any $i\in\{0,\ldots, n\}$, natural 
	isomorphisms 
	\[
		H^i\bigl(\Gamma, M_0\bigr)\xleftarrow{\cong} H^{i-1}\bigl(\Gamma, M_1\bigr)\xleftarrow{\cong}\ldots\xleftarrow{\cong} H^1\bigl(\Gamma, M_{i-1}\bigr)\xleftarrow{\cong} H^0\bigl(\Gamma, M_i\bigr).
	\]
	Using~\ref{eq:coind formula}, we argue similarly to see that there is 
	a sequence of injective homomorphisms 
	\[
		H^i\bigl(\Lambda, M_0\bigr)\hookleftarrow H^{i-1}\bigl(\Lambda, M_1\bigr)\hookleftarrow\ldots\hookleftarrow H^1\bigl(\Lambda, M_{i-1}\bigr)\hookleftarrow H^0\bigl(\Lambda, M_i\bigr).
	\]
    for any $i\in\{0,\ldots, n\}$. 
	In particular, we obtain, for $i\in\{0,\ldots, n\}$, the following
	commutative square with an upper horizontal isomorphism and a lower horizontal 
	monomorphism:
	\[
		\xymatrix{
		H^0(\Gamma, M_i)\ar[d]^{\res}\ar[r]^\cong &H^i(\Gamma, M_0)\ar[d]^{\res}\\
		H^0(\Lambda, M_i)\ar@{^(->}[r] &H^i(\Lambda,M_0)
		}
	\]
So it is enough to show that the
left restriction map is injective. Since it is given by the inclusion $M_i^\Gamma\hookrightarrow M_i^\Lambda$, this is obvious. 

\section{Applications}

\subsection{The groups $SL_n$ and $EL_n$ over general rings}\label{subsec: Sl and El}

The subgroup of $\GL_n(R)$ that is generated by elementary matrices is 
denoted by $\EL_n(R)$. 
 
\begin{theorem} \label{main-application}
Let $R$ be an infinite integral domain, $K$ its field of fractions. For some $n\geq 2$, let $\Gamma<\GL_n(K)$ be a countable group which contains a finite index subgroup of 
$\EL_n(R)$. 

Then there exists a subgroup $\Lambda<\Gamma$ such that
for every $k<n$ and every $\omega\in \Gamma^{k}$, $\Lambda^\omega$ contains an infinite amenable normal subgroup.

Assume in addition that $\Gamma$ contains a finite index subgroup of $\SL_n(R)$ and 
for every ideal $\{0\}\neq I\lhd R$ there exist
infinitely many invertible elements $x\in R$ such that $x^n-1 \in I$. 
Then also for every $\omega\in \Gamma^{n}$, $\Lambda^\omega$ contains an infinite amenable normal subgroup.
\end{theorem}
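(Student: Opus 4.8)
The plan is to take for $\Lambda$ the line stabiliser $\Lambda=\Stab_\Gamma([e_1])$, where $[e_1]\in\PP(K^n)$ is the line through $e_1$ (the same subgroup $\Lambda$ furnished by the first part), and to verify the additional conclusion for $n$-tuples. Fix $\omega=(\gamma_1,\dots,\gamma_n)\in\Gamma^n$. Since $\Lambda^{\gamma_i}=\Stab_\Gamma(\gamma_i^{-1}[e_1])$, the group $\Lambda^\omega=\bigcap_{i=1}^n\Stab_\Gamma(p_i)$ is exactly the stabiliser in $\Gamma$ of the $n$ lines $p_i:=\gamma_i^{-1}[e_1]$. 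Choosing representatives $b_i\in R^n$ (clearing denominators) and setting $W:=\spn(b_1,\dots,b_n)$, I would split into two cases according to whether $\dim W<n$ or $\dim W=n$.

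If $\dim W<n$, the situation is the one already handled in the first part: every $g\in\Lambda^\omega$ fixes each $p_i$ and hence stabilises $W$, so the abelian group $N_W=\{\,\id+M:\IM(M)\subseteq W,\ M|_W=0\,\}$ of transvections with image in $W$ fixes $W$ pointwise, satisfies $N_W\cap\Gamma\subseteq\Lambda^\omega$, and is normalised by $\Lambda^\omega$. Writing $N_W$ in a basis of $R^n$ adapted to $W$ and clearing the determinant of the basis change shows $N_W\cap\SL_n(R)$ is infinite (the parameter space has dimension $\dim W\cdot(n-\dim W)\ge 1$), so its intersection with the given finite-index subgroup of $\SL_n(R)$ inside $\Gamma$ is an infinite abelian normal subgroup of $\Lambda^\omega$. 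This case needs only that $R$ is infinite.

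The main new case, and the one for which the arithmetic hypothesis is needed, is $\dim W=n$, i.e.\ $b_1,\dots,b_n$ is a basis of $K^n$ and the lines are in general position. Then every element of $\Lambda^\omega$ is diagonal in the basis $(b_i)$, so $\Lambda^\omega$ is abelian and it suffices to produce infinitely many elements of $\Gamma$ diagonal in $(b_i)$. I would set $P=[\,b_1\mid\cdots\mid b_n\,]\in M_n(R)$, $\delta=\det P\ne 0$, $I=\delta R$, and for a unit $x\in R^\times$ consider $g_x=P\operatorname{diag}(x,\dots,x,x^{1-n})P^{-1}$, which stabilises each $p_i$ and has determinant $1$. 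The hard part is integrality. Writing $\operatorname{diag}(x,\dots,x,x^{1-n})=x\cdot\id+(x^{1-n}-x)E_{nn}$ with $E_{nn}$ the matrix unit, and using $P\operatorname{adj}(P)=\delta\cdot\id$, one finds $g_x=x\cdot\id+\tfrac1\delta\,(x^{1-n}-x)\,P E_{nn}\operatorname{adj}(P)$, where $PE_{nn}\operatorname{adj}(P)\in M_n(R)$. Hence $g_x\in\SL_n(R)$ precisely when $\delta\mid x^{1-n}-x$, and since $x^{1-n}-x=-x^{1-n}(x^n-1)$ with $x^{1-n}$ a unit, this is equivalent to $x^n\equiv 1\pmod I$. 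This is exactly the content of the hypothesis, which supplies infinitely many such units $x$; these form a subgroup of $R^\times$, and $x\mapsto g_x$ is an injective homomorphism into $\SL_n(R)$ whose image $A$ is therefore an infinite abelian subgroup stabilising every $p_i$. Intersecting $A$ with the finite-index subgroup of $\SL_n(R)$ inside $\Gamma$ yields an infinite abelian subgroup of $\Lambda^\omega$, automatically normal because $\Lambda^\omega$ is abelian. The point to get right is this denominator-clearing step: the identity $P\operatorname{adj}(P)=\delta\cdot\id$ makes the constant part $x\cdot\id$ integral for free, so the only obstruction is divisibility by $\delta$ of the single non-constant eigenvalue difference, which is precisely why the required condition takes the form $x^n-1\in I$.
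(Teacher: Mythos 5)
Your treatment of the genuinely new case ($k=n$ with the lines in general position) is correct and is essentially the paper's construction: the paper takes the same $\Lambda=\Gamma\cap Q$ with $Q=\Stab([e_1])$, makes the same dichotomy on whether the $n$ lines span, and in the spanning case produces exactly your one-parameter family — an element $S_x$ acting by $x$ on $n-1$ of the lines and by $x^{-(n-1)}$ on the remaining one, integral precisely because $x^n-1$ lies in a principal ideal generated by a ``common denominator''. Your adjugate identity $P\operatorname{adj}(P)=\delta\cdot\id$ replaces the paper's explicit functional $\psi$ with $\psi(u_i)=t_i$ and the element $q_x$ with $x^{-(n-1)}-x=q_xr$; these are the same computation in different clothing. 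One genuine (small) improvement on your side: you observe that when the lines span, $\Lambda^\omega$ is simultaneously diagonalizable and hence abelian, so \emph{any} infinite subgroup does the job; the paper instead always locates the infinite subgroup inside the fixed two-step solvable group $S=\ker\bigl(Q\to\PGL(V/V_1)\bigr)$, which has the advantage of giving a uniform amenable normal subgroup $\Lambda^\omega\cap S$ in all cases.

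Two caveats. First, you do not prove the first part of the theorem, and your $\dim W<n$ argument cannot simply be recycled for it: you produce elements of $N_W\cap\SL_n(R)$, but in the first part $\Gamma$ is only assumed to contain a finite-index subgroup of $\EL_n(R)$, and a general unipotent element of $N_W$ (obtained by conjugating a block-unipotent matrix by $P$ and clearing $\det P$) is in $\SL_n(R)$ but not obviously in $\EL_n(R)$ — for $n=2$ in particular this is a real issue. The paper's transvections $T_r=\id+r\,e_1\,(\phi\circ p_2)$ are chosen precisely so that they are visibly products of elementary matrices $E_{1j}(r\phi(e_j))$, which is what makes the $\EL_n(R)$ hypothesis suffice. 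So your claim that the degenerate case ``needs only that $R$ is infinite'' is accurate only under the second part's stronger hypothesis that a finite-index subgroup of $\SL_n(R)$ lies in $\Gamma$. Second, a minor point of bookkeeping: the normal subgroup of $\Lambda^\omega$ should be taken to be $N_W\cap\Gamma$ (equivalently $N_W\cap\Lambda^\omega$), with the intersection against the finite-index copy of $\SL_n(R)$ serving only to witness infinitude; $\Lambda^\omega$ normalizes $N_W$ and $\Gamma$, but need not normalize that finite-index subgroup, so ``$N_W$ intersected with the finite-index subgroup'' is not itself obviously normal in $\Lambda^\omega$. In the spanning case this does not matter since $\Lambda^\omega$ is abelian.
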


\begin{proof}
We let $V=K^n$, and $e_1,\ldots, e_n$ be the standard basis.
We denote by $Q<\GL_n(V)$ the stabilizer of the line
$V_1=\spn\{e_1\} \in \PP(V)$ and
$S\triangleleft Q$ be the kernel of the obvious homomorphism
$Q \to \PGL(V/V_1)$.
Clearly, $S$ is two-step solvable, thus amenable.
We let $V_2=\spn_K\{e_2,\dots, e_n\}$,
thus $V=V_1\oplus V_2$.

Let $\Lambda=\Gamma\cap Q$.
For given $k$ and $\omega=(\gamma_0,\ldots,\gamma_{k-1})\in \Gamma^{k}$
we consider the group $\Lambda^\omega$.
Examining whether it contains an infinite amenable normal subgroup,
we may and will assume that $\gamma_0=e$.
For $i\in\{1,\ldots,k-1\}$ we let $t_i\in K$ and $u_i\in V_2$
be defined by 
\[\gamma_i^{-1}e_1=u_i+t_i e_1.\]
We set
\( U=\spn\{u_1,\dots,u_{k-1}\} < V_2 \).

Assume $U\lneq V_2$.
Then there exists a nontrivial functional $\phi\in V_2^*$ which vanishes on $U$.
Multiplying $\phi$ by the common denominator of $\phi(e_2),\ldots,\phi(e_n)\in K$,
we may assume that $\{\phi(e_2),\ldots,\phi(e_n)\}\subset R$.
For $r\in R$ we define $T_r:V\to V$ by 
\[T_r(v)=v+r\phi\circ p_2(v)\cdot e_1, \]
where $p_2:V\to V_2$ is the projection. 
Observe that $r\mapsto T_r$ is an injection of the additive group of $R$ into $\EL_n(R)$,
whose image (up to a finite index) is in $\Lambda^\omega\cap S$.
We deduce that $\Lambda^\omega\cap S$ is infinite.
This is an infinite amenable normal subgroup of $\Lambda^\omega$, as required.

If $k<n$, looking at the dimensions yields that $U\lneq V_2$,
thus proving the first part of the theorem. 

We now consider the case $k=n$.
We assume further that $\Gamma$ contains $\SL_n(R)$ up to finite index 
and that for every ideal $\{0\}\neq I\lhd R$ there exist
infinitely many invertible elements $x\in R$ such that $x^n-1 \in I$.
Again we will show that the amenable normal subgroup $\Lambda^\omega \cap S$ is infinite.

By the argument above it remains to deal with the case $U=V_2$. Hence we will assume that $U=V_2$, thus $\{u_1,\ldots,u_{n-1}\}$ forms a basis of $V_2$.
We let $\psi\in V_2^*$ be the functional defined by $\psi(u_i)=t_i$.
We let $r\in R\backslash\{0\}$ be such that $\{r\psi(e_2),\ldots,r\psi(e_n)\}\subset R$,
and we set $I=(r)$ to be the ideal generated by~$r$.
Fixing an invertible element $x\in R$ such that $x^n-1 \in I$,
and letting $q_x\in R$ be an element satisfying
$x^{-(n-1)}-x=q_xr$,
we define $S_x:V\to V$ by setting for $t\in K$ and $u\in V_2$
\[ S_x(te_1+u)= (x^{-(n-1)}t-q_xr\psi(u))e_1 + xu. \]
It is clear that, for every such $x$, $S_x$ is in $\SL_n(R)\cap S$ and stabilizes $\gamma_iV_1$ for every $i=0,\ldots n-1$,
thus $\Lambda^\omega \cap S$ is infinite.
\end{proof}

Our next goal will be to show that some integral domains satisfy the condition appearing in the
previous theorem.

\begin{prop} \label{algebraic-numbers}
	Assume that a ring $R$ satisfies at least one of the following properties: 
	\begin{enumerate}
	\item $R$ is an infinite field.
	\item
	$R$ is a subring of the field $F(t)$ of rational functions over a finite field $F$, and 
	$R$ contains an invertible element $\alpha$ that is not a root of unity.
	\item $R$ is a subring of the field $\bar{\bbQ}$ of algebraic numbers, and 
	$R$ contains an invertible element $\alpha$ that is not a root of unity.
	\end{enumerate}
Then for every $n\in\bbN$ and for every ideal $\{0\}\neq I\lhd R$ there exist
infinitely many invertible elements $x\in R$ such that $x^n-1 \in I$.
\end{prop}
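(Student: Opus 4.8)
The plan is to reduce the whole statement to the single assertion that the image of the unit $\alpha$ in the quotient $R/I$ is a root of unity, and then to prove that assertion by exhibiting a suitable finite quotient. First dispose of case (1): if $R$ is a field, the only nonzero ideal is $I=R$, so $x^n-1\in I$ holds for every $x$, and there are infinitely many invertible $x$ because $R$ is infinite. For cases (2) and (3) the key observation is that it suffices to produce an integer $M\geq 1$ with $\alpha^M-1\in I$. Indeed, granting this, the elements $x_k=\alpha^{kM}$ for $k\geq 1$ are invertible (powers of the unit $\alpha$), pairwise distinct (if $\alpha^{kM}=\alpha^{k'M}$ then $\alpha^{(k-k')M}=1$, forcing $k=k'$ since $\alpha$ is not a root of unity), and satisfy $x_k^n\equiv 1\pmod I$ because $\alpha^M\equiv 1\pmod I$. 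So everything follows once the image of $\alpha$ in $(R/I)^\times$ is shown to have finite order.

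I would then pass to the small subring $R'\subseteq R$ generated over the prime ring by $\alpha$ and $\alpha^{-1}$: in case (3) the ring $R$ has characteristic $0$ and contains $\bbZ$, so I set $R'=\bbZ[\alpha,\alpha^{-1}]$; in case (2) the ring $R\subset F(t)$ has characteristic $p$ and contains $\mathbb{F}_p$, so I set $R'=\mathbb{F}_p[\alpha,\alpha^{-1}]$. Put $I'=I\cap R'$. This ideal is nonzero: choosing $0\neq a\in I$, the element $a$ is algebraic over the fraction field of the prime-ring polynomial algebra in $\alpha$, so it satisfies a relation $\sum_{i=0}^d b_i a^i=0$ with coefficients $b_i$ in that algebra and $b_0\neq 0$; then $b_0=-a\sum_{i\geq 1}b_i a^{i-1}\in aR\subseteq I$ is a nonzero element of $I'$. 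In case (3) one more application of this trick to a nonzero element of $I'$, using its minimal polynomial over $\bbQ$, shows that $I'$ even contains a nonzero integer $N$. Since $\alpha$ is a unit of $R'$, it now suffices to prove that $R'/I'$ is finite: then $(R'/I')^\times$ is finite, the image of $\alpha$ has some finite order $M$, and $\alpha^M-1\in I'\subseteq I$, as required.

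The function field case is easy. Because $\alpha$ is a unit that is not a root of unity, it is a non-constant element of $F(t)$, hence transcendental over $\mathbb{F}_p$; thus $R'=\mathbb{F}_p[\alpha,\alpha^{-1}]\cong\mathbb{F}_p[X,X^{-1}]$ is a Laurent polynomial ring, a principal ideal domain with finite residue rings. Writing $I'=(g)$ and clearing the invertible factor $X$, I may assume $g(0)\neq 0$, so that $R'/I'\cong\mathbb{F}_p[X]/(g)$ is a finite-dimensional $\mathbb{F}_p$-algebra, in particular finite.

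The number field case is the crux, and the obstacle is precisely the primes dividing the leading and constant coefficients of the minimal polynomial. By Gauss's lemma the integer polynomials vanishing at $\alpha$ are generated by the primitive integral minimal polynomial $f=c_dX^d+\cdots+c_0$, so $\bbZ[\alpha]\cong\bbZ[X]/(f)$ and hence $R'\cong\bbZ[X,X^{-1}]/(f)$. With the nonzero integer $N\in I'$ from above, $R'/I'$ is a quotient of $R'/NR'\cong(\bbZ/N)[X,X^{-1}]/(\bar f)$, so it is enough to show this is finite; by the Chinese remainder theorem I reduce to $N=p^a$ a prime power. Reducing modulo $p$ gives $\mathbb{F}_p[X,X^{-1}]/(\bar f)$, and primitivity of $f$ forces $\bar f\neq 0$ in $\mathbb{F}_p[X]$; dividing out the unit $X^v$, where $v$ is the order of vanishing of $\bar f$ at $0$, leaves a polynomial $h$ with both $h(0)\neq 0$ and nonzero leading coefficient over the field $\mathbb{F}_p$, whence $\mathbb{F}_p[X,X^{-1}]/(\bar f)\cong\mathbb{F}_p[X]/(h)$ is finite. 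The point to stress is that $\bbZ[\alpha]/N\bbZ[\alpha]$ may itself be infinite at such bad primes, and finiteness is recovered only after inverting $\alpha$, which collapses the problematic factor $X^v$. Finally, finiteness lifts from the reduction modulo $p$ to the reduction modulo $p^a$: the ideal generated by $p$ is nilpotent and $R'/p^aR'$ is Noetherian, so its associated graded pieces are finitely generated modules over the finite ring $\mathbb{F}_p[X,X^{-1}]/(\bar f)$ and are therefore finite, making $R'/p^aR'$ finite. Conceptually this is just the statement that the finitely generated one-dimensional domain $R'$ has finite residue rings at all nonzero ideals, but the explicit computation above is what makes the localization mechanism transparent.
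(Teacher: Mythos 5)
Your proof is correct, and its overall skeleton is the same as the paper's: reduce everything to showing that $\alpha$ has torsion image in $(R/I)^\times$, pass to the small subring generated by $\alpha$ and $\alpha^{-1}$ over the prime ring, show that $I$ meets this subring nontrivially via the constant term of a minimal polynomial of some nonzero element of $I$, and then invoke a finiteness statement for the resulting quotient. The one place where you genuinely diverge is the proof of that finiteness statement in the number field case. The paper isolates it as Lemma~\ref{elementry-lemma} ($\bbZ[\alpha]/(k)$ is finite) and proves it by the Chinese remainder theorem plus an induction on the exponent of a prime power, driven by the identity $(1+p^jr)^p=1+p^{j+1}r'$; you instead present $\bbZ[\alpha,\alpha^{-1}]$ as $\bbZ[X,X^{-1}]/(f)$ via Gauss's lemma, reduce mod $p$ using primitivity of $f$, and lift from $p$ to $p^a$ by nilpotence of $(p)$. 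Your route is more computational but also more self-contained and transparent about where primitivity enters; the paper's induction is slicker but leaves the base case ("a finite dimensional vector space over $\ZZ/(p)$") to the reader, which for non-integral $\alpha$ requires exactly the Gauss's lemma computation you carry out.

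One side remark of yours is wrong and worth correcting: you assert that $\bbZ[\alpha]/N\bbZ[\alpha]$ "may itself be infinite at bad primes" dividing the leading coefficient, with finiteness recovered only after inverting $\alpha$. In fact $\bbZ[\alpha]/(p)\cong\mathbb{F}_p[X]/(\bar f)$ is finite for every prime $p$, precisely because primitivity forces $\bar f\neq 0$ and any nonzero polynomial over a field has finite quotient; this is the content of the paper's Lemma~\ref{elementry-lemma}. Inverting $\alpha$ is convenient in your setup (it lets you conclude directly that the image of $\alpha$ is a unit of a finite ring) but is not what rescues finiteness. This does not affect the validity of your argument, since you prove finiteness of $R'/NR'$ directly and that is all you use.
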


The proof of the proposition in case $R$ is a ring of algebraic numbers will depend on the following elementary lemma.

\begin{lemma} \label{elementry-lemma}
Given $\alpha\in \bar{\bbQ}$ and $0\neq k\in \bbN$,
the ring $\ZZ[\alpha]/(k)$ is finite.
\end{lemma}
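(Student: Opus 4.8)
The plan is to present $\ZZ[\alpha]$ as a quotient of $\ZZ[x]$ and thereby reduce the statement to a finiteness question about polynomial rings over $\ZZ/k$. First I would let $g\in\QQ[x]$ be the monic minimal polynomial of $\alpha$ over $\QQ$ and let $f\in\ZZ[x]$ be the associated \emph{primitive} integer polynomial, i.e. the unique (up to sign) primitive polynomial with $f=\lambda g$ for some $\lambda\in\QQ^\times$; thus $f(\alpha)=0$ and the content of $f$ is $1$. The evaluation map $\ZZ[x]\to\ZZ[\alpha]$, $x\mapsto\alpha$, is surjective, and since $h(\alpha)=0$ is equivalent to $g\mid h$ in $\QQ[x]$, its kernel is $g\QQ[x]\cap\ZZ[x]=f\QQ[x]\cap\ZZ[x]$. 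The first key step is to show, via Gauss's lemma (if $f$ is primitive and $q\in\QQ[x]$ satisfies $fq\in\ZZ[x]$, then $q\in\ZZ[x]$), that this kernel equals the principal ideal $(f)$. Consequently $\ZZ[\alpha]\cong\ZZ[x]/(f)$, and hence
\[
	\ZZ[\alpha]/(k)\cong\ZZ[x]/(f,k)\cong(\ZZ/k)[x]/(\bar f),
\]
where $\bar f$ denotes the reduction of $f$ modulo $k$. It therefore suffices to prove that $(\ZZ/k)[x]/(\bar f)$ is finite.

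Since $\ZZ/k\cong\prod_p \ZZ/p^{e_p}$ over the primes $p$ dividing $k$, the Chinese remainder theorem splits $(\ZZ/k)[x]/(\bar f)$ as a finite product of rings $(\ZZ/p^e)[x]/(\bar f)$, so I would reduce to the case $k=p^e$ a prime power. Here primitivity of $f$ enters decisively: because the content of $f$ is $1$, the reduction of $f$ modulo $p$ is a \emph{nonzero} element of $\mathbb{F}_p[x]$, no matter which prime $p$ divides $k$. Writing $A=(\ZZ/p^e)[x]/(\bar f)$, one then has
\[
	A/pA\cong \mathbb{F}_p[x]/(f \bmod p),
\]
which is a quotient of $\mathbb{F}_p[x]$ by a nonzero polynomial and hence finite.

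To pass from finiteness of $A/pA$ to finiteness of $A$, I would exploit that $p$ is nilpotent in $\ZZ/p^e$, so the filtration $A\supseteq pA\supseteq\cdots\supseteq p^eA=0$ terminates after finitely many steps. For each $i$, multiplication by $p^i$ induces a surjection $A/pA\twoheadrightarrow p^iA/p^{i+1}A$, so every successive quotient is finite; as there are only $e$ of them, $A$ itself is finite. Assembling the pieces back through the Chinese remainder isomorphism then gives finiteness of $\ZZ[\alpha]/(k)$.

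The main obstacle — and the reason the statement is not completely trivial — is that $\alpha$ need not be an algebraic integer, so $\ZZ[\alpha]$ is in general \emph{not} finitely generated as a $\ZZ$-module (for instance $\ZZ[1/2]$); one cannot simply invoke that a finitely generated abelian group modulo $k$ is finite. The two devices that circumvent this are the identification $\ZZ[\alpha]\cong\ZZ[x]/(f)$ with $f$ primitive, which is precisely what keeps track of the denominators hidden in $\alpha$, and the nilpotence of $p$ in $\ZZ/p^e$, which upgrades finiteness modulo $p$ to finiteness modulo $p^e$.
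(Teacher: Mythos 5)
Your proof is correct, but it takes a genuinely different route from the paper's. Both arguments begin by using the Chinese remainder theorem to reduce to the case $k=p^e$ a prime power, but from there they diverge. The paper argues multiplicatively: it inducts on $e$, recasting finiteness of $\ZZ[\alpha]/(p^e)$ as the existence of an exponent $i$ with $\alpha^i-1\in(p^e)$ and lifting this from $e$ to $e+1$ via $(1+p^er)^p=1+p^{e+1}r'$. You argue structurally: you present $\ZZ[\alpha]$ as $\ZZ[x]/(f)$ with $f$ the \emph{primitive} integral minimal polynomial (justified by Gauss's lemma), observe that primitivity forces $f\bmod p\neq 0$ so that $A/pA\cong\mathbb{F}_p[x]/(f\bmod p)$ is finite, and then climb the finite filtration $A\supseteq pA\supseteq\cdots\supseteq p^eA=0$. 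Each approach buys something. The paper's argument produces, essentially for free, the stronger multiplicative fact actually used in Proposition~\ref{algebraic-numbers} (that the image of an invertible $\alpha$ is torsion in the unit group of the quotient); on the other hand, its induction step tacitly requires the image of $\alpha$ to be invertible modulo $p^e$, which need not hold for an arbitrary $\alpha\in\bar{\bbQ}$ (e.g.\ $\alpha=\sqrt{2}$, $p=2$), whereas your filtration argument proves the lemma exactly as stated, for all $\alpha$, with no invertibility hypothesis. You also correctly isolate the one genuine subtlety -- that $\alpha$ need not be an algebraic integer, so $\ZZ[\alpha]$ need not be a finitely generated $\ZZ$-module -- and show precisely how primitivity of $f$ absorbs the denominators; the paper leaves this point implicit in its base case.
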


\begin{proof}[Proof of \ref{elementry-lemma}]
By the general version of the Chinese remainder theorem (for the ring $\bbZ[\alpha]$),
for coprime $k_1,k_2\in\bbN$, the two ideals $(k_1k_2)$ and $(k_1)\cap(k_2)$ coincide
and $\bbZ[\alpha]/(k_1k_2)\simeq \bbZ[\alpha]/(k_1)\times \bbZ[\alpha]/(k_2)$.
It follows that we may assume that $k=p^j$ is a prime power.
We now prove the statement that $\ZZ[\alpha]/(p^j)$ is finite by induction on $j$.
For $j=1$ the statement is clear, as this is a finite dimensional vector space over $\ZZ/(p)$.
For the induction step, observe that the statement is equivalent to the statement that
in $\ZZ[\alpha]$, for some $i$, $\alpha^i-1$ is in the ideal $(p^j)$.
For this statement induction applies easily:
$\alpha^i = 1+ p^jr$ implies $\alpha^{ip}=(1+p^jr)^p=1+p^{j+1}r'$.
\end{proof}

\begin{proof}[Proof of \ref{algebraic-numbers}]
The case that $R$ is an infinite field is trivial.

Assume $R<F(t)$ and that $\alpha\in R$ is an invertible element which is not a root of unity.
We assume (as we may upon replacing $F$ by $F\cap R$) that $R$ is an $F$-algebra,
thus $F[\alpha,\alpha^{-1}]<R$.
Let $\{0\}\neq I\lhd R$ be given.
We claim that the image of $\alpha$ in $(R/I)^\times$ is torsion.
We first observe that $I\cap F[\alpha,\alpha^{-1}] \neq \{0\}$.
Indeed, $F(t)$ is a finite field extension of $F(\alpha)$ (it is finitely generated and of transcendental degree 0),
so if $\sum a_i\beta^i$ is a minimal polynomial over $F(\alpha)$ for some non-zero function $\beta\in I$ with $a_i\in F[\alpha]$
then $a_0\in I$.
The claim follows from the obvious fact that $F[\alpha,\alpha^{-1}]/(I\cap F[\alpha,\alpha^{-1}])$
is a finite extension of $F$, hence finite.
Now, if $\alpha^m-1 \in I$, then the set $\{\alpha^{jm}~|~j\in \bbZ\}$ contains, for every $n$, infinitely many invertible elements $x$ with $x^n-1\in I$.

Assume now that $R<\bar{\bbQ}$.
Again, we claim that the image of $\alpha$ in $(R/I)^\times$ is torsion, for any given $\{0\}\neq I\lhd R$.
We first observe that $I\cap\bbZ\neq \{0\}$.
Indeed, if $\sum a_i\beta^i$ is a minimal polynomial for some non-zero algebraic number $\beta\in I$ with $a_i\in\bbZ$
then $a_0\in I$.
Thus, in order to prove the claim it is enough to show that the image of $\alpha$ is torsion in $(R/(k))^\times$ for every $k\in \bbN$.
This follows from Lemma~\ref{elementry-lemma}.
As before, if $\alpha^m-1 \in I$, then the set $\{\alpha^{jm}~|~j\in \bbZ\}$ contains, for every $n$, infinitely many invertible elements $x$ with $x^n-1\in I$.
\end{proof}

\begin{proof}[Proofs of Theorems~\ref{thm: application to SL} and~\ref{thm: extended application to SL}]
	By a theorem of Cheeger and Gromov~\cite{cheeger+gromov} all $L^2$-Betti numbers
	of a group vanish if the group has an infinite normal amenable subgroup.
	Hence Theorem~\ref{thm: vanishing for groups with amenable normal subgroups}
	and the first part of Theorem~\ref{main-application} yield
	Theorem~\ref{thm: application to SL}. Similarly and
	using Proposition~\ref{algebraic-numbers} in addition, one obtains
	Theorem~\ref{thm: extended application to SL}.
\end{proof}

\subsection{Thompson's groups}\label{subsec: Thompson}
Thompson's group $T$ is defined as the group of piecewise linear homeomorphisms of the circle $\bbR/\bbZ$ that are differentiable except at 
finitely many dyadic rational numbers, i.e.~points in $\bbZ[\frac{1}{2}]/\bbZ$, 
and such that the slopes on intervalls of differentiability are powers of~$2$ with 
respect to the obvious flat structure on $\bbR/\bbZ$. 
Thompson's groups $F$ is defined to be the stabilizer of $0\in \bbR/\bbZ$ in $T$.

\begin{proof}[Proof of Theorem~\ref{Thompson}]
Let $n\ge 1$. 
Let $\Lambda\subset F$ be the stabilizer subgroup inside $F$ of a finite set of $(n+1)$-many dyadic rational points. 
For any $\omega\in F^m$ with $m\ge 1$, the subgroup $\Lambda^\omega\subset F$ is 
the stabilizer subgroup of a finite set of $d$ dyadic rational points with some $d\in \{n+1, \ldots, m(n+1)\}$. By the 
description above it is evident that $\Lambda^\omega\cong F^d$. 
The $L^2$-Betti numbers of any $d$-fold product of 
infinite groups, thus of $\Lambda^\omega$, vanish up to degree $d-1\ge n$ 
by repeated application of the Kuenneth formula in $L^2$-cohomology~\cite{lueck-book}*{Theorem~6.54. on p.~265}. 
Now Theorem~\ref{thm: vanishing for groups with amenable normal subgroups} implies 
that the $L^2$-Betti numbers of $F$ vanish up to degree $n$, and since $n$ was arbitrary, 
Theorem~\ref{Thompson} for the group $F$ is proved. 
For the group $T$ we run the almost the same argument, taking $\Lambda$ to be the 
stabilizer inside $T$ and considering $\omega\in T^m$. We again obtain that  $\Lambda^\omega\cong F^d$ and finish the argument as above. 
\end{proof}

\subsection{Permutation group theoretic criterion}

\begin{theorem}
Let $\Gamma$ be a countable group, $\Lambda<\Gamma$ an amenable subgroup such that the closure
of the image of $\Gamma$ in the Polish group $\Sym(\Gamma/\Lambda)$ is not discrete.
Then $\betti_n(\Gamma)=0$ for every $n\geq 0$.
\end{theorem}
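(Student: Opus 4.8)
The plan is to deduce the statement from Corollary~\ref{cor: vanishing for n-step s-normal amenable subgroups} by showing that the non-discreteness hypothesis forces $\Lambda$ to be $n$-step s-normal for \emph{every} $n\ge 0$. Since $\Lambda$ is amenable by assumption, that corollary then yields $\betti_i(\Gamma)=0$ for all $i\ge 0$, which is what we want.

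First I would set up the dictionary between the combinatorics of the groups $\Lambda^\omega$ and the point stabilizers for the left-translation action of $\Gamma$ on $X=\Gamma/\Lambda$. The stabilizer of a coset $\gamma\Lambda$ is $\gamma\Lambda\gamma^{-1}=\Lambda^{\gamma^{-1}}$, so for a tuple $\omega=(\gamma_0,\ldots,\gamma_k)\in\Gamma^{k+1}$ the intersection $\Lambda^\omega=\Lambda^{\gamma_0}\cap\cdots\cap\Lambda^{\gamma_k}$ is precisely the pointwise stabilizer in $\Gamma$ of the finite set $F_\omega=\{\gamma_0^{-1}\Lambda,\ldots,\gamma_k^{-1}\Lambda\}\subseteq X$. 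Conversely every finite subset of $X$ is of the form $F_\omega$ for some $\omega$. Hence the assertion ``$\Lambda^\omega$ is infinite for all $\omega$'' is equivalent to ``the pointwise $\Gamma$-stabilizer of every finite subset of $X$ is infinite,'' and this in turn says exactly that $\Lambda$ is $n$-step s-normal for every $n$.

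The heart of the argument is a topological lemma about the closure $\bar H$ of the image $H$ of $\Gamma$ in $\Sym(X)$. Recall that the pointwise stabilizers $U_F=\{g: g|_F=\id\}$ of finite subsets $F\subseteq X$ form a clopen neighborhood basis of the identity in $\Sym(X)$. I would prove that $\bar H$ is non-discrete if and only if $\bar H\cap U_F$ is infinite for every finite $F$. The substantive direction is the contrapositive: if $\bar H\cap U_F$ were finite for some $F$, then listing its finitely many non-identity elements $g_1,\ldots,g_m$ and choosing for each $j$ a point $x_j$ with $g_j x_j\ne x_j$, one enlarges $F$ to $F'=F\cup\{x_1,\ldots,x_m\}$ and obtains $\bar H\cap U_{F'}=\{e\}$, so $\bar H$ is discrete. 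This step is where the non-discreteness hypothesis is genuinely used, and I expect it to be the main (though elementary) point; everything else is bookkeeping.

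Finally I would descend from $\bar H$ back to $\Gamma$. Because $U_F$ is clopen one has $\bar H\cap U_F=\overline{H\cap U_F}$, so $H\cap U_F$ is dense in the infinite Hausdorff space $\bar H\cap U_F$ and is therefore itself infinite. But $H\cap U_F$ is exactly the image of the pointwise stabilizer $\Lambda^\omega$ under $\Gamma\to\Sym(X)$, so $\Lambda^\omega$ surjects onto an infinite group and is infinite. As this holds for every $\omega$, the subgroup $\Lambda$ is $n$-step s-normal for all $n$, and Corollary~\ref{cor: vanishing for n-step s-normal amenable subgroups} completes the proof. (If the kernel $\bigcap_{\gamma}\Lambda^\gamma$ happened to be infinite it would already be an infinite normal amenable subgroup, giving the vanishing by Cheeger--Gromov directly; but the argument above treats all cases uniformly and needs no such distinction.)
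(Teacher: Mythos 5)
Your proposal is correct and follows essentially the same route as the paper: reduce to Corollary~\ref{cor: vanishing for n-step s-normal amenable subgroups} by showing every $\Lambda^\omega$ is infinite, identifying $\Lambda^\omega$ with the intersection of the image of $\Gamma$ with a basic open subgroup of $\Sym(\Gamma/\Lambda)$. The only difference is cosmetic: the paper argues by contradiction (a finite $\Lambda^\omega$ forces some $\Lambda^{\omega'}$ to equal the core of $\Lambda$, making the image of $\Gamma$ discrete), whereas you argue directly that non-discreteness of the closure makes each $\bar H\cap U_F$ infinite and then descend by density --- both arguments are sound.
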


\begin{proof}
We apply Corollary~\ref{cor: vanishing for n-step s-normal amenable subgroups}. 
We will be done by showing that for any $n$ and any $\omega\in \Gamma^n$, $\Lambda^\omega$ is infinite.
Assume otherwise that for some $n$ and some $\omega\in \Gamma^n$,
$\Lambda^\omega$ is finite.
Then for some $n'$ and $\omega'\in \Gamma^{n'}$,
$\Lambda^{\omega'}$ is the core of $\Lambda$, $\bigcap_{\gamma\in \Gamma} \Lambda^\gamma$.
That is, the identity element of the image of $\Gamma$ in $\Sym(\Gamma/\Lambda)$ could be expressed as
the intersection of finitely many open subgroups, contradicting the non-discreteness of this image.
\end{proof}


\begin{bibdiv}
\begin{biblist}
	\bib{envelopes}{article}{
		author={Bader, Uri},
		author={Furman, Alex},
		author={Sauer, Roman},
	    title={Lattice envelopes},
	    date={2012},
	    note={in preparation},
	}

\bib{berberian}{article}{
   author={Berberian, S. K.},
   title={The maximal ring of quotients of a finite von Neumann algebra},
   journal={Rocky Mountain J. Math.},
   volume={12},
   date={1982},
   number={1},
   pages={149--164},
}

\bib{borel}{article}{
   author={Borel, A.},
   title={The $L^2$-cohomology of negatively curved Riemannian symmetric
   spaces},
   journal={Ann. Acad. Sci. Fenn. Ser. A I Math.},
   volume={10},
   date={1985},
}
\bib{brown-book}{book}{
   author={Brown, Kenneth S.},
   title={Cohomology of groups},
   series={Graduate Texts in Mathematics},
   volume={87},
   publisher={Springer-Verlag},
   place={New York},
   date={1982},
   pages={x+306},
}

\bib{cheeger+gromov}{article}{
   author={Cheeger, Jeff},
   author={Gromov, Mikhael},
   title={$L_2$-cohomology and group cohomology},
   journal={Topology},
   volume={25},
   date={1986},
   number={2},
   pages={189--215},
}


\bib{dicks+linnell}{article}{
   author={Dicks, Warren},
   author={Linnell, Peter A.},
   title={$L^2$-Betti numbers of one-relator groups},
   journal={Math. Ann.},
   volume={337},
   date={2007},
   number={4},
   pages={855--874},
}
\bib{ershov}{article}{
   author={Ershov, Mikhail},
   author={Jaikin-Zapirain, Andrei},
   title={Property (T) for noncommutative universal lattices},
   journal={Invent. Math.},
   volume={179},
   date={2010},
   number={2},
   pages={303--347},
}

\bib{farber}{article}{
   author={Farber, Michael},
   title={von Neumann categories and extended $L^2$-cohomology},
   journal={$K$-Theory},
   volume={15},
   date={1998},
   number={4},
   pages={347--405},
}

\bib{gaboriau}{article}{
   author={Gaboriau, Damien},
   title={Invariants $l^2$ de relations d'\'equivalence et de groupes},
   language={French},
   journal={Publ. Math. Inst. Hautes \'Etudes Sci.},
   number={95},
   date={2002},
   pages={93--150},
}


\bib{lueck-dimension}{article}{
   author={L{\"u}ck, Wolfgang},
   title={Dimension theory of arbitrary modules over finite von Neumann
   algebras and $L^2$-Betti numbers. I. Foundations},
   journal={J. Reine Angew. Math.},
   volume={495},
   date={1998},
   pages={135--162},
}

\bib{lueck-book}{book}{
   author={L{\"u}ck, Wolfgang},
   title={$L^2$-invariants: theory and applications to geometry and
   $K$-theory},
   series={Ergebnisse der Mathematik und ihrer Grenzgebiete. 3. Folge. A
   Series of Modern Surveys in Mathematics [Results in Mathematics and
   Related Areas. 3rd Series. A Series of Modern Surveys in Mathematics]},
   volume={44},
   publisher={Springer-Verlag},
   place={Berlin},
   date={2002},
   pages={xvi+595},
}
\bib{mimura}{article}{
  author={Mimura, Masato},
  title={Fixed point properties and second bounded cohomology of universal
  lattices on Banach spaces},
  journal={J. Reine Angew. Math.},
  volume={653},
  date={2011},
  pages={115--134},
}
\bib{olbrich}{article}{
   author={Olbrich, Martin},
   title={$L^2$-invariants of locally symmetric spaces},
   journal={Doc. Math.},
   volume={7},
   date={2002},
   pages={219--237 (electronic)},
}

\bib{peterson+thom}{article}{
   author={Peterson, Jesse},
   author={Thom, Andreas},
   title={Group cocycles and the ring of affiliated operators},
   journal={Invent. Math.},
   volume={185},
   date={2011},
   number={3},
   pages={561--592},
}

\bib{reich}{article}{
   author={Reich, Holger},
   title={On the $K$- and $L$-theory of the algebra of operators affiliated
   to a finite von Neumann algebra},
   journal={$K$-Theory},
   volume={24},
   date={2001},
   number={4},
   pages={303--326},
}

\bib{sauer+thom}{article}{
   author={Sauer, Roman},
   author={Thom, Andreas},
   title={A spectral sequence to compute $L^2$-Betti numbers of groups
   and groupoids},
   journal={J. Lond. Math. Soc. (2)},
   volume={81},
   date={2010},
   number={3},
   pages={747--773},
}

\bib{shalom}{article}{
   author={Shalom, Yehuda},
   title={The algebraization of Kazhdan's property (T)},
   conference={
      title={International Congress of Mathematicians. Vol. II},
   },
   book={
      publisher={Eur. Math. Soc., Z\"urich},
   },
   date={2006},
   pages={1283--1310},
}

\bib{thom}{article}{
   author={Thom, Andreas},
   title={$L^2$-cohomology for von Neumann algebras},
   journal={Geom. Funct. Anal.},
   volume={18},
   date={2008},
   number={1},
   pages={251--270},
}

\end{biblist}
\end{bibdiv}

\end{document}